\def\input@path{{figures/}}\makeatother
\newtheorem{theorem}{Theorem}[section]
\newtheorem{corollary}[theorem]{Corollary}
\newtheorem{proposition}[theorem]{Proposition}
\newtheorem{lemma}[theorem]{Lemma}
\newtheorem*{theorem*}{Theorem}
\theoremstyle{definition}
\newtheorem{example}[theorem]{Example}
\crefname{equation}{Equation}{Equations}
\newcommand{\R}{\mathbb{R}} 
\renewcommand{\b}[1]{{\boldsymbol{#1}}} 
\newcommand{\set}[2]{\left\{ #1 \;\middle|\; #2 \right\}} 
\newcommand{\bigset}[2]{\big\{ #1 \;\big|\; #2 \big\}} 
\newcommand{\Bigset}[2]{\Big\{ #1 \;\Big|\; #2 \Big\}} 
\newcommand{\ssm}{\smallsetminus} 
\newcommand{\dotprod}[2]{\langle \, #1 \; | \; #2 \, \rangle} 
\newcommand{\one}{\mathbbm{1}} 
\newcommandx{\ones}[1][1=n]{\one_{#1}} 
\newcommand{\eqdef}{\mbox{\,\raisebox{0.2ex}{\scriptsize\ensuremath{\mathrm:}}\ensuremath{=}\,}} 
\newcommand{\simplex}{\triangle} 
\DeclareMathOperator{\conv}{conv} 
\DeclareMathOperator{\cone}{cone} 
\newcommand{\ie}{\textit{i.e.}~} 
\newcommand{\eg}{\textit{e.g.}~} 
\newcommand{\aka}{\textit{a.k.a.}~} 
\definecolor{darkblue}{rgb}{0,0,0.7} 
\definecolor{green}{RGB}{57,181,74} 
\definecolor{violet}{RGB}{147,39,143} 
\newcommand{\darkblue}{\color{darkblue}} 
\newcommand{\defn}[1]{\textsl{\darkblue #1}} 
\newcommand{\polytope}[1]{\mathsf{#1}} 
\newcommandx{\Perm}[1][1=n]{\polytope{Perm}_{#1}} 
\newcommandx{\Asso}[2][1=n,2={}]{\mathsf{Asso}^{#2}(#1)} 
\newcommandx{\Nest}[2][1=\building,2={}]{\mathsf{Nest}^{#2}(#1)} 
\newcommandx{\Fan}[1][1=F]{\mathcal{#1}} 
\newcommandx{\nestedFan}[1][1=\quiver]{\mathcal{F}(#1)} 
\newcommand{\ivector}[1]{\b{\iota}_{#1}} 
\newcommandx{\ray}[1][1=r]{\b{#1}} 
\newcommandx{\rays}[1][1=R]{\b{#1}} 
\newcommandx{\gZono}[1][1=G]{\mathsf{Z}_{#1}} 
\newcommandx{\gArr}[1][1=G]{\mathcal{A}_{#1}} 
\newcommandx{\gFan}[1][1=G]{\Fan_{#1}} 
\newcommandx{\gFanO}[1][1=G]{\widehat{\Fan}_{#1}} 
\newcommandx{\cc}[1][1=G]{\mathbb{K}_{#1}} 
\newcommandx{\braid}[1][1=n]{\mathcal{B}_{#1}} 
\newcommandx{\sbraid}[1][1=n]{\widehat{\mathcal{B}}_{#1}} 
\newcommandx{\dZono}[1][1=\b{h}]{\mathsf{D}_{#1}} 
\newcommand{\deformationCone}{\mathbb{DC}} 
\newcommandx{\coefficient}[3][1={\b{s}}, 2=\b{r}, 3=\b{r}']{\alpha_{#2,#3}(#1)} 
\newcommandx{\virtualPolytopes}[1][1=d]{\mathbb{V}^{#1}} 
\newcommandx{\VDP}[1][1=n]{\mathbb{VDP}^{#1}} 
\newcommandx{\CVDP}[1][1=n]{\overrightarrow{\mathbb{VDP}}^{#1}} 
\newcommand{\VD}[1][1=n]{\mathbb{VD}} 
\newcommandx{\opcone}[1][1={\mu,\omega}]{\polytope{C}_{#1}}
\newcommandx{\orcone}[1][1={\omega}]{\polytope{C}_{#1}}
\def\part{\@startsection{part}{1}%
\z@{.7\linespacing\@plus\linespacing}{.8\linespacing}%
{\LARGE\sffamily\centering}}
\def\l@section{\@tocline{1}{5pt}{0pc}{}{}}
\let\oldtocpart=\tocpart
\renewcommand{\tocpart}[2]{\sc\large\oldtocpart{#1}{#2}}
\let\oldtocsection=\tocsection
\renewcommand{\tocsection}[2]{\bf\oldtocsection{#1}{#2}}
\let\oldtocsubsubsection=\tocsubsubsection
\renewcommand{\tocsubsubsection}[2]{\quad\oldtocsubsubsection{#1}{#2}}
\title{Deformed graphical zonotopes}
\thanks{Partially supported by the French ANR grants CAPPS~17\,CE40\,0018, and CHARMS~19\,CE40\,0017.}
\author{Arnau Padrol}
\address[Arnau Padrol]{Institut de Math\'ematiques de Jussieu - Paris Rive Gauche, Sorbonne Universit\'e, Paris}
\email{arnau.padrol@imj-prg.fr}
\urladdr{\url{https://webusers.imj-prg.fr/~arnau.padrol/}}
\author{Vincent Pilaud}
\address[Vincent Pilaud]{CNRS \& LIX, \'Ecole Polytechnique, Palaiseau}
\email{vincent.pilaud@lix.polytechnique.fr}
\urladdr{\url{http://www.lix.polytechnique.fr/~pilaud/}}
\author{Germain Poullot}
\address[Germain Poullot]{Institut de Math\'ematiques de Jussieu - Paris Rive Gauche, Sorbonne Universit\'e,~Paris}
\email{germain.poullot@imj-prg.fr}
\urladdr{\url{https://webusers.imj-prg.fr/germain.poullot}}
\begin{document}

\begin{abstract}
We study deformations of graphical zonotopes.
Deformations of the classical permutahedron (which is the graphical zonotope of the complete graph) have been intensively studied in recent years under the name of generalized permutahedra.
We provide an irredundant description of the deformation cone of the graphical zonotope associated to a graph~$G$, consisting of independent equations defining its linear span (in terms of non-cliques of~$G$) and of the inequalities defining its facets (in terms of common neighbors of neighbors in~$G$).
In particular, we deduce that the faces of the standard simplex corresponding to induced cliques in~$G$ form a linear basis of the deformation cone, and that the deformation cone is simplicial if and only if $G$ is triangle-free.
\end{abstract}

\vspace*{.4cm}
\maketitle


\section*{Introduction}

The \defn{graphical zonotope} of a graph $G$ is a convex polytope $\gZono$ whose geometry encodes several combinatorial properties of~$G$. 
For example, its vertices are in bijection with the acyclic orientations of~$G$~\mbox{\cite[Prop.~2.5]{Stanley2007}} and its volume is the number of spanning trees of~$G$~\mbox{\cite[Ex.~4.64]{StanleyEC1}}. 
When $G$ is the complete graph~$K_n$, the graphical zonotope is a translation of the classical \mbox{$n$-dimensional} \defn{permutahedron}. This polytope, obtained as the convex hull of the $n!$ permutations of the vector $(1,2,\dots,n)\in \R^n$, was first introduced by Schoute in 1911~\cite{Schoute1911}, and has become one of the most studied polytopes in geometric and algebraic combinatorics.

A \defn{deformed permutahedron} (\aka \defn{generalized permutahedron}) is a polytope obtained from the permutahedron by translating its facet-defining hyperplanes without passing through a vertex. These polytopes were originally introduced by Edmonds in 1970 under the name of \defn{polymatroids} as a polyhedral generalization of matroids in the context of linear optimization~\cite{Edmonds}. They were rediscovered by Postnikov in 2009~\cite{Postnikov}, who initiated the investigation of their rich combinatorial structure. They have since become a widely studied family of polytopes that appears naturally in several areas of mathematics, such as algebraic combinatorics~\cite{AguiarArdila, ArdilaBenedettiDoker, PostnikovReinerWilliams}, optimization~\cite{SubmodularFunctionsOptimization},  game theory~\cite{DanilovKoshevoy2000}, statistics~\cite{MortonPachterShiuSturmfelsWienand2009,MohammadiUhlerWangYu2018}, and economic theory~\cite{JoswigKlimmSpitz2021}. The set of deformed permutahedra can be parametrized by the cone of \defn{submodular functions}~\cite{Edmonds,Postnikov}. 

In general, a deformation of a polytope~$\polytope{P}$ can be equivalently described as 
\begin{enumerate*}[(i)]
 \item a polytope obtained from~$\polytope{P}$ by moving the vertices so that the directions of all edges are preserved~\cite{Postnikov,PostnikovReinerWilliams}, 
 \item a polytope obtained from~$\polytope{P}$ by translating its facet-defining halfspaces without passing through a vertex~\cite{Postnikov,PostnikovReinerWilliams}, 
 \item a polytope whose normal fan coarsens the normal fan of~$\polytope{P}$~\cite{McMullen-typeCone}, 
 \item a polytope whose support function is a convex  piecewise linear continuous function supported on the normal fan of~$\polytope{P}$~\cite[Sec.~6.1]{CoxLittleSchenckToric}\cite[Sec.~9.5]{DeLoeraRambauSantos},  or
 \item a Minkowski summand of a dilate of~$\polytope{P}$~\cite{Shephard, Meyer}.
\end{enumerate*}
The set of deformations of~$\polytope{P}$ always forms a polyhedral cone under dilation and Minkowski addition, which is called the \defn{deformation cone} of~$\polytope{P}$~\cite{Postnikov}.
Its interior is called the \defn{type cone} of the normal fan of $\polytope{P}$~\cite{McMullen-typeCone}, and contains those polytopes with the same normal fan as~$\polytope{P}$. 
When~$\polytope{P}$ has rational vertex coordinates, then the type cone is known as the \defn{numerically effective cone} and encodes the embeddings of the associated toric variety into projective space~\cite{CoxLittleSchenckToric}.

There exist several methods to parametrize and describe the deformation cone of a given polytope (see \eg \cite[App.~15]{PostnikovReinerWilliams}), for example via the \defn{height deformation space} and the \defn{wall-crossing inequalities} or via the \defn{edge deformation space} and the \defn{polygonal face equalities}.
However, these methods only provide redundant inequality descriptions of the deformation cone.
Not even the dimension of the deformation cone is easily deduced from these descriptions, as illustrated by the difficulty of describing which fans have a  nonempty type cone (\ie describing \defn{realizable fans} \cite[Chap.~9.5.3]{DeLoeraRambauSantos}), or a one dimensional type cone (\ie describing \defn{Minkowski indecomposable} polytopes~\cite{Kallay1982,McMullen1987,Meyer,PreslawskiYost2016,Shephard}).

The search of irredundant facet descriptions of deformation cones of particular families of combinatorial polytopes has received considerable attention recently~\cite{ACEP-DeformationsCoxeterPermutahedra, BazierMatteDouvilleMousavandThomasYildirim, CDGRY2020, CastilloLiu2020, PadrolPaluPilaudPlamondon, AlbertinPilaudRitter, PadrolPilaudPoullot-deformedNestohedra}. One of the motivations sparking this interest arises from the \defn{amplituhedron program} to study scattering amplitudes in mathematical physics~\cite{ArkaniHamedTrnka-Amplituhedron}. As described in \cite[Sec.~1.4]{PadrolPaluPilaudPlamondon}, the deformation cone provides canonical realizations of a polytope (seen as a \defn{positive geometry}~\cite{ArkaniHamedBaiLam-PositiveGeometries}) in the positive region of the kinematic space, akin to those of the associahedron in~\cite{ArkaniHamedBaiHeYan}.

The main result of this paper (\cref{thm:main}) presents complete irredundant descriptions of the deformation cones of graphical zonotopes. Note that, since graphical zonotopes are deformed permutahedra, their type cones appear as particular faces of the submodular cone. Faces of the submodular cone are far from being well understood. For example, determining its rays remains an open problem since the 1970s, when it was first asked by Edmonds~\cite{Edmonds}. 

It is worth noting that most of the existing approaches to compute deformation cones only focus on simple polytopes with simplicial normal fans~\cite{ChapotonFominZelevinsky,PostnikovReinerWilliams}. Nevertheless, most graphical zonotopes are not simple. They are simple only for chordful graphs (those where every cycle induces a clique), see \cite[Prop.~5.2]{PostnikovReinerWilliams} and \cite[Rmk.~6.2]{Kim08}.
In this paper, we thus use an alternative approach to describe the deformation cone of a non-simple polytope based on a simplicial refinement of its normal cone.

The paper is organized as follows.
We first recall in \cref{sec:preliminaries} the necessary material concerning polyhedral geometry (\cref{subsec:fansPolytopes}), deformation cones (\cref{subsec:deformationCones}), and graphical zonotopes (\cref{subsec:graphicalZonotopes}).
We then describe in \cref{sec:graphicalTypeCones} the deformation cone of any graphical zonotope, providing first a possibly redundant description (\cref{subsec:firstDescription}), then irredundant descriptions of its linear span (\cref{subsec:linearSpan}) and of its facet-defining inequalities (\cref{subsec:facets}), and finally a characterization of graphical zonotopes with simplicial type cones (\cref{subsec:triangleFree}).


\section{Preliminaries}
\label{sec:preliminaries}


\subsection{Fans and polytopes}
\label{subsec:fansPolytopes}

We mainly follow~\cite{Ziegler-polytopes} for the notation concerning polyhedral geometry, and we refer to it for more background and details.

A \defn{polytope} $\polytope{P}$ in~$\R^d$ is the convex hull of finitely many points. Its \defn{faces} are the zero-sets of non-negative affine functions on~$\polytope{P}$. Its \defn{vertices}, \defn{edges} and \defn{facets} are its faces of dimension~$0$, dimension~$1$, and codimension~$1$, respectively. 
A $d$-dimensional polytope is called \defn{simple} if every vertex is incident to $d$ facets.

Similarly, a \defn{polyhedral cone} $\polytope{C}$ in~$\R^d$ is the positive span of finitely many vectors,
and its \defn{faces} are the zero-sets of non-negative linear functions on~$\polytope{C}$. Its \defn{rays} and \defn{facets} are its faces of dimension and codimension~$1$, respectively. Its \defn{lineality space} is the inclusion-minimal face, which is always the largest linear subspace contained in~$\polytope{C}$.
A cone is \defn{simplicial} if its rays are linearly independent, and \defn{pointed} if its lineality is $\{\b{0}\}$. Note that every cone can be decomposed as the free sum of its lineality with a pointed cone (obtained from any section transversal to the lineality).

A \defn{fan} $\Fan$ in $\R^d$ is a collection of cones closed under taking faces and such that the intersection of any two cones is a common face of the two cones.
Two cones of a fan are \defn{adjacent} if they share a facet. 
The fan $\Fan$ is \defn{complete} if the union of its cones is~$\R^d$, \defn{essential} if all its cones are pointed, and \defn{simplicial} if all its cones are simplicial.
We will say that~$\Fan$ is \defn{supported} on the set of vectors~$\b{S}$ if every cone of~$\Fan$ is the cone spanned by a subset of~$\b{S}$.
An essential fan is supported by representatives of its rays, and this is the unique inclusion-minimal set with this property, up to positive rescaling. 
For non-essential fans, however, non-canonical choices have to be made.
We say that~$\Fan$ \defn{coarsens}~$\Fan[G]$ and that~$\Fan[G]$ \defn{refines}~$\Fan$ if every cone of a fan~$\Fan$ is a union of cones of a fan~$\Fan[G]$.

The \defn{normal cone} of a face~$\polytope{F}$ of a polytope~$\polytope{P}$ in $\R^d$ is the polyhedral cone in the dual space $(\R^d)^*$ (which we identify with $\R^d$ via the standard inner product) consisting of the linear forms whose maximal value on~$\polytope{P}$ is attained on all the points of~$\polytope{F}$. The \defn{normal fan} of $\polytope{P}$ is the collection of all the normal cones to its faces. It is always complete, and essential whenever~$\polytope{P}$ is full dimensional.

The \defn{Minkowski sum} of two polytopes~$\polytope{P}$ and~$\polytope{Q}$ is the polytope $\polytope{P} + \polytope{Q} \eqdef \set{\b{p} + \b{q}}{\b{p} \in \polytope{P} \text{ and } \b{q} \in \polytope{Q}}$. The normal fan of $\polytope{P} + \polytope{Q}$ is the common refinement of the normal fans of~$\polytope{P}$ and~$\polytope{Q}$. 
We say that~$\polytope{P}$ is a \defn{Minkowski summand} of~$\polytope{R}$ if there is a polytope~$\polytope{Q}$ such that $\polytope{P} + \polytope{Q} = \polytope{R}$, and a \defn{weak Minkowski summand} if there is a scalar $\lambda \ge 0$ and a polytope~$\polytope{Q}$ such that ${\polytope{P} + \polytope{Q} = \lambda \polytope{R}}$. Equivalently, $\polytope{P}$ is a weak Minkowski summand of~$\polytope{Q}$ if and only if the normal fan of~$\polytope{Q}$ refines the normal fan of~$\polytope{P}$~\cite[Thm.~4]{Shephard}. The polytope $\polytope{P} \subset \R^d$ is called \defn{Minkowski indecomposable} if all its weak Minkowski summands are of the form $\lambda \polytope{P} + \b{t}$ for some scalar~$\lambda \ge 0$ and vector~$\b{t} \in \R^d$.

A \defn{zonotope} is a Minkowski sum of line segments, called its~\defn{generators}. Its normal fan is the fan induced by the arrangement of hyperplanes orthogonal to these segments, see~\cite[Sec.~7.3]{Ziegler-polytopes}.


\subsection{Deformation cones}
\label{subsec:deformationCones}

The weak Minkowski summands of a polytope~$\polytope{P} \subset \R^d$ are also known as \defn{deformations} of~$\polytope{P}$, as they can be always obtained from~$\polytope{P}$ by translations of its facet-defining inequalities. 
It is sometimes convenient to consider the set of deformations of~$\polytope{P}$ embedded inside the real vector space of \defn{virtual $d$-dimensional polytopes}~$\virtualPolytopes$~\cite{PukhlikovKhovanskii}.
This is the set of formal differences of polytopes $\polytope{P} - \polytope{Q}$ under the equivalence relation $(\polytope{P}_1 - \polytope{Q}_1) = (\polytope{P}_2 - \polytope{Q}_2)$ whenever $\polytope{P}_1 + \polytope{P}_2 = \polytope{Q}_1 + \polytope{Q}_2$.
Endowed with Minkowski addition, it is the Grothendieck group of the semigroup of polytopes, which are embedded into~$\virtualPolytopes$ via the map~$\polytope{P} \mapsto \polytope{P} - \{\b{0}\}$.
It extends to a real vector space via dilation: for $\polytope{P} - \polytope{Q} \in \virtualPolytopes$ and $\lambda \in \R$, we set $\lambda (\polytope{P} - \polytope{Q}) \eqdef \lambda \polytope{P} - \lambda \polytope{Q}$ when $\lambda \ge 0$, and $\lambda (\polytope{P} - \polytope{Q}) \eqdef ((-\lambda) \polytope{Q}) - ((-\lambda) \polytope{P})$ when~$\lambda < 0$.
Here, $\lambda \polytope{P} \eqdef \set{\lambda \b{p}}{\b{p} \in \polytope{P}}$ denotes the dilation of $\polytope{P}$ by $\lambda \ge 0$.
(Note in particular that $-\polytope{P}$ does not represent the reflection of $\polytope{P}$, but its group inverse.)

As we already mentioned, the set of deformations of a polytope~$\polytope{P} \subset \R^d$ forms a polyhedral cone under dilation and Minkowski addition, called the \defn{deformation cone} and denoted by~$\deformationCone(\polytope{P})$:
\begin{align*}
\deformationCone(\polytope{P}) & \eqdef \set{\polytope{Q} \subset \R^d}{\polytope{Q} \text{ is a weak Minkowski summand of } \polytope{P}}.
\end{align*}
Note that~$\deformationCone(\polytope{P})$ is a closed convex cone (dilations and Minkowski sums preserve weak Minkowski summands) and contains a lineality subspace of dimension~$d$ (translations preserve weak Minkowski summands). It is the set of all polytopes whose normal fan coarsens the normal fan~$\Fan$ of~$\polytope{P}$.
Its interior consists of all polytopes whose normal fan is~$\Fan$, and was called the \defn{type cone} of~$\Fan$ by McMullen~\cite{McMullen-typeCone}. 
The faces of $\deformationCone(\polytope{P})$ are the deformation cones of the Minkowski summands of~$P$, and the face lattice is
described by the inclusions $\deformationCone(\polytope{Q}) \subseteq \deformationCone(\polytope{R})$ whenever the normal fan of~$\polytope{Q}$ coarsens the normal fan of~$\polytope{R}$.
Having into account the lineality, we will say that the deformation cone is \defn{simplicial} when its quotient modulo translations is simplicial. We will also talk about the rays of~$\deformationCone(\polytope{P})$, meaning the rays of its quotient modulo translations.
They are spanned by the indecomposable Minkowski summands of~$\polytope{P}$ of dimension at least~$1$ (note that $0$-dimensional summands account for the space of translations).

There are several linearly isomorphic presentations of this cone. The first ones we are aware of are due to McMullen~\cite{McMullen-typeCone} and Meyer~\cite{Meyer}, even though a description was already implicit in previous work of Shephard~\cite{Shephard}. In fact, the type cone can also be reinterpreted as a chamber of regular triangulations of a vector configuration, as introduced in the theory of secondary polytopes~\cite{GelfandKapranovZelevinsky}, see \cite[Sect.~9.5]{DeLoeraRambauSantos} for details. Other formulations can be found, for example, in the appendix of~\cite{PostnikovReinerWilliams}.

The following convenient formulation from~\cite[Lem.~2.1]{ChapotonFominZelevinsky} shows that the deformation cone of simple polytopes is isomorphic to a polyhedral cone, and provides an explicit inequality description, usually called the \defn{wall-crossing inequalities}.

Let $\polytope{P} \subset \R^d$ be a polytope with normal fan $\Fan$ supported on the vector set~$\b{S}$.
Let~$\b{G}$ be the $N \times d$-matrix whose rows are the vectors in~$\b{S}$. 
For any height vector~$\b{h} \in \R^N$, we define the polytope
\(
\polytope{P}_\b{h} \eqdef \set{\b{x} \in \R^d}{\b{G}\b{x} \le \b{h}}.
\)
It is not hard to see that any weak Minkowski summand of~$\polytope{P}$ is of the form $\polytope{P}_\b{h}$ for some $\b{h} \in \R^N$. 

Moreover, for deformations $\polytope{P}_{\b{h}}$ and $\polytope{P}_{\b{h}'}$ of $\polytope{P}$, we have $\polytope{P}_{\b{h}} + \polytope{P}_{\b{h}'} = \polytope{P}_{\b{h}+\b{h}'}$ and $\lambda \polytope{P}_{\b{h}} = \polytope{P}_{\lambda\b{h}}$ for any $\lambda>0$. Hence, the deformation cone, which lies in the space of virtual polyhedra, is linearly isomorphic to the cone
\[
\set{\b{h}\in\R^N}{\polytope{P}_{\b{h}} \in \deformationCone(\polytope{P})}.
\]

\begin{proposition}[\cite{GelfandKapranovZelevinsky,ChapotonFominZelevinsky}]
\label{prop:characterizationPolytopalFan}
Let~$\polytope{P} \subset \R^d$ be a simple polytope with simplicial normal fan~$\Fan$ supported on the rays~$\b{S}$.
Then the deformation cone~$\deformationCone(\polytope{P})$ is the set of polytopes~$\polytope{P}_\b{h}$ for all~$\b{h}$ in the cone of~$\R^\b{S}$ defined by the inequalities
\[
\sum_{\b{s} \in \b{R} \cup \b{R}'} \coefficient[{\b{s}}][\b{R}][\b{R}'] \, \b{h}_{\b{s}} \geq  0
\]
for all adjacent maximal cones~$\R_{\ge0}\b{R}$ and~$\R_{\ge0}\b{R}'$ of~$\Fan$ with~$\b{R} \ssm \{\b{r}\} = \b{R}' \ssm \{\b{r}'\}$, where~$\coefficient[{\b{s}}][\b{R}][\b{R}']$ denote the coefficients in the unique linear dependence\footnote{The linear dependence is unique up to rescaling, and we fix this arbitrary positive rescaling for convenience in the exposition.}
\[
\sum_{\b{s} \in \b{R} \cup \b{R}'} \coefficient[{\b{s}}][\b{R}][\b{R}'] \, \b{s} = \b{0}
\]
among the rays of~$\b{R} \cup \b{R}'$ such that~$\coefficient[{\b{r}}][\b{R}][\b{R}'] + \coefficient[{\b{r}'}][\b{R}][\b{R}'] = 2$. 
\end{proposition}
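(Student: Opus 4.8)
The plan is to parametrize deformations by height vectors and to recognize the deformation cone as the cone of those height vectors whose associated piecewise-linear support function is convex; the wall-crossing inequalities then appear as the local convexity conditions, one per wall of~$\Fan$. I would start from the observation recorded just before the statement: every weak Minkowski summand of~$\polytope{P}$ is of the form~$\polytope{P}_\b{h}$ for some~$\b{h} \in \R^\b{S}$, with~$\b{h} \mapsto \polytope{P}_\b{h}$ linear for Minkowski addition and dilation. Hence it suffices to decide, for which~$\b{h}$, the polytope~$\polytope{P}_\b{h}$ has normal fan coarsening~$\Fan$. Since~$\Fan$ is simplicial, the~$d$ rays of each maximal cone~$\R_{\ge0}\b{R}$ form a basis of~$\R^d$, so there is a unique linear form~$\ell_\b{R}$ with~$\ell_\b{R}(\b{s}) = \b{h}_\b{s}$ for all~$\b{s} \in \b{R}$; for adjacent cones these forms agree on the common wall~$\vect(\b{R} \cap \b{R}')$, as that wall is spanned by the shared rays. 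They therefore glue into a single continuous piecewise-linear function~$f_\b{h} \colon \R^d \to \R$, linear on each maximal cone of~$\Fan$.

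Next I would establish the key equivalence: \emph{$\polytope{P}_\b{h}$ is a deformation of~$\polytope{P}$ if and only if~$f_\b{h}$ is convex.} If the normal fan of~$\polytope{P}_\b{h}$ coarsens~$\Fan$, then its support function is linear on each maximal cone of~$\Fan$ and takes value~$\b{h}_\b{s}$ on each ray~$\b{s}$, hence coincides with~$f_\b{h}$, which is thus convex. Conversely, a convex piecewise-linear function that is linear on each cone of a complete fan is the support function of a polytope whose normal fan coarsens that fan, and that polytope is exactly~$\polytope{P}_\b{h}$. I would then reduce global convexity of~$f_\b{h}$ to \emph{local} convexity across each wall, invoking the standard fact that a piecewise-linear function on a complete fan is convex if and only if it is convex across every wall, i.e.\ its value on each cone dominates the linear extension coming from the adjacent cone.

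It remains to turn local convexity across the wall between~$\R_{\ge0}\b{R}$ and~$\R_{\ge0}\b{R}'$ (with~$\b{R} \ssm \{\b{r}\} = \b{R}' \ssm \{\b{r}'\}$) into the stated inequality. Local convexity reads~$\b{h}_\b{r} = f_\b{h}(\b{r}) \ge \ell_{\b{R}'}(\b{r})$ (the value on the~$\b{R}$ side dominates the linear extension from the~$\b{R}'$ side; equivalently, since~$\polytope{P}_\b{h}$ is the supremum, the vertex in~$\R_{\ge0}\b{R}'$ evaluated against~$\b{r}$ is at most~$\b{h}_\b{r}$). Solving the unique linear dependence~$\sum_{\b{s} \in \b{R} \cup \b{R}'} \coefficient[{\b{s}}][\b{R}][\b{R}'] \, \b{s} = \b{0}$ for~$\b{r}$ and using~$\ell_{\b{R}'}(\b{s}) = \b{h}_\b{s}$ for every~$\b{s} \in (\b{R} \cup \b{R}') \ssm \{\b{r}\} = \b{R}'$, I get~$\ell_{\b{R}'}(\b{r}) = -\tfrac{1}{\coefficient[{\b{r}}][\b{R}][\b{R}']} \sum_{\b{s} \neq \b{r}} \coefficient[{\b{s}}][\b{R}][\b{R}'] \, \b{h}_\b{s}$, so the condition~$\b{h}_\b{r} \ge \ell_{\b{R}'}(\b{r})$ is precisely~$\sum_{\b{s} \in \b{R} \cup \b{R}'} \coefficient[{\b{s}}][\b{R}][\b{R}'] \, \b{h}_\b{s} \ge 0$, provided~$\coefficient[{\b{r}}][\b{R}][\b{R}'] > 0$. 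To secure this last sign, I would argue that~$\b{r}$ and~$\b{r}'$ lie on opposite sides of~$\vect(\b{R} \cap \b{R}')$, which forces their coefficients in the circuit to share a sign; the normalization~$\coefficient[{\b{r}}][\b{R}][\b{R}'] + \coefficient[{\b{r}'}][\b{R}][\b{R}'] = 2$ then makes both positive, simultaneously pinning down the otherwise scale-ambiguous dependence and guaranteeing that clearing the denominator preserves the inequality direction.

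The main obstacle I anticipate lies in two places. The first is the local-to-global convexity reduction: one must justify cleanly that convexity of a piecewise-linear function on a complete fan can be tested on codimension-one interfaces alone, which is where the completeness of~$\Fan$ is genuinely used. The second is the orientation and sign bookkeeping for the circuit coefficients, where simpliciality of~$\Fan$ is essential --- it guarantees~$|\b{R} \cup \b{R}'| = d+1$ and hence a \emph{unique} linear dependence, while the opposite-sides argument is what makes the normalization~$\coefficient[{\b{r}}][\b{R}][\b{R}'] + \coefficient[{\b{r}'}][\b{R}][\b{R}'] = 2$ both well-posed and sign-correct.
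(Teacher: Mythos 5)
The paper does not actually prove this proposition: it imports it from the cited references (the formulation is that of \cite[Lem.~2.1]{ChapotonFominZelevinsky}), so the fair comparison is with the standard proof found there. Your outline is exactly that proof: parametrize by heights, glue the forms $\ell_{\b{R}}$ into a piecewise linear $f_{\b{h}}$ (well-defined because, the fan being simplicial, every intersection of maximal cones is spanned by shared rays), characterize membership in $\deformationCone(\polytope{P})$ by convexity of $f_{\b{h}}$, reduce convexity to wall-by-wall convexity using completeness, and unwind the wall condition $\b{h}_{\b{r}} \ge \ell_{\b{R}'}(\b{r})$ via the unique circuit. Your sign bookkeeping is also correct and is the point most write-ups gloss over: $\b{r}$ and $\b{r}'$ lie on opposite sides of the wall, which forces $\coefficient[{\b{r}}][\b{R}][\b{R}']$ and $\coefficient[{\b{r}'}][\b{R}][\b{R}']$ to have the same sign, and the normalization $\coefficient[{\b{r}}][\b{R}][\b{R}'] + \coefficient[{\b{r}'}][\b{R}][\b{R}'] = 2$ then makes both positive, so clearing the denominator preserves the inequality.

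There is, however, one step that is false as written: the ``only if'' half of your key equivalence, where you claim that if the normal fan of $\polytope{P}_{\b{h}}$ coarsens $\Fan$ then the support function of $\polytope{P}_{\b{h}}$ takes the value $\b{h}_{\b{s}}$ at every ray $\b{s}$. The map $\b{h} \mapsto \polytope{P}_{\b{h}}$ is not injective, and a coordinate $\b{h}_{\b{s}}$ may strictly exceed $\max_{\b{x} \in \polytope{P}_{\b{h}}} \langle \b{s}, \b{x} \rangle$ whenever the inequality indexed by $\b{s}$ is redundant. Concretely, let $\Fan$ be the hexagonal fan with rays $\pm(1,0)$, $\pm(0,1)$, $\pm(1,1)$ and let $Q = \{(x,y) : x \ge 0,\ y \ge 0,\ x+y \le 1\}$, a deformation whose support values are $1$ on $(1,0),(0,1),(1,1)$ and $0$ on their negatives. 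Since $x \le 1$ is implied by $x+y\le 1$ and $y \ge 0$, the height vector $\b{h}$ with $\b{h}_{(1,0)} = 1+\varepsilon$ and all other coordinates tight still satisfies $\polytope{P}_{\b{h}} = Q$; yet the wall-crossing inequality at the wall $\R_{\ge 0}(1,0)$, namely $\b{h}_{(0,-1)} + \b{h}_{(1,1)} - \b{h}_{(1,0)} \ge 0$, evaluates to $-\varepsilon < 0$, so $f_{\b{h}}$ is not convex. Thus ``$\polytope{P}_{\b{h}}$ is a deformation iff $f_{\b{h}}$ is convex'' is false, and your opening reduction (``it suffices to decide for which $\b{h}$ the normal fan of $\polytope{P}_{\b{h}}$ coarsens $\Fan$'') conflates the set $\{\b{h} : \polytope{P}_{\b{h}} \in \deformationCone(\polytope{P})\}$, which strictly contains the wall-crossing cone, with what the proposition actually asserts: that the \emph{image} of the wall-crossing cone under $\b{h} \mapsto \polytope{P}_{\b{h}}$ equals $\deformationCone(\polytope{P})$. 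The repair is short and leaves the rest of your argument intact: for the inclusion $\deformationCone(\polytope{P}) \subseteq \{\polytope{P}_{\b{h}}\}$, do not take an arbitrary $\b{h}$ representing a given deformation $Q$, but the canonical tight one, $\b{h}_{\b{s}} \eqdef \max_{\b{x} \in Q} \langle \b{s}, \b{x} \rangle$; for this choice the support function of $Q$ is linear on each cone of $\Fan$ and agrees with $\b{h}$ on the rays, so it equals $f_{\b{h}}$, is convex, satisfies the wall inequalities, and $Q = \polytope{P}_{\b{h}}$. With that substitution, and granting the standard local-to-global convexity lemma you correctly flag as the remaining ingredient, your proof is complete.
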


This characterization can be extended to general (not necessarily simple) polytopes. One straightforward way to do so is via a simplicial refinement of the normal fan. If such a simplicial refinement contains additional rays, then the type cone will be embedded in a higher dimensional space, but projecting out these additional coordinates gives a linear isomorphism with the standard presentation. See \mbox{\cite[Prop.~3]{PilaudSantos-quotientopes}} and \mbox{\cite[Prop.~1.7]{PadrolPaluPilaudPlamondon}}.

\begin{proposition}
\label{prop:non-simplicial}
Let $\polytope{P} \subset \R^d$ be a polytope whose normal fan $\Fan$ is refined by the simplicial fan~$\Fan[G]$ supported on the rays~$\b{S}$.
Then the deformation cone~$\deformationCone(\polytope{P})$ is the set of polytopes~$\polytope{P}_\b{h}$ for all~$\b{h}$ in the cone of~$\R^\b{S}$ defined by
\begin{itemize}
\item the equalities~$\sum_{\ray[s] \in \rays \cup \rays'} \coefficient[{\ray[s]}][\rays][\rays'] \, \b{h}_{\ray[s]} = 0$ for any adjacent maximal cones~$\R_{\ge0}\rays$ and~$\R_{\ge0}\rays'$ of~$\Fan[G]$ belonging to \textbf{the same} maximal cone of $\Fan$,
\item the inequalities~$\sum_{\ray[s] \in \rays \cup \rays'} \coefficient[{\ray[s]}][\rays][\rays'] \, \b{h}_{\ray[s]} \geq  0$ for any adjacent maximal cones~$\R_{\ge0}\rays$ and~$\R_{\ge0}\rays'$ of~$\Fan[G]$ belonging to \textbf{distinct} maximal cones of $\Fan$,
\end{itemize}
where
\(
\sum_{\b{s} \in \b{R} \cup \b{R}'} \coefficient[{\b{s}}][\b{R}][\b{R}'] \, \b{s} = \b{0}
\)
is the unique linear dependence with~${\coefficient[{\b{r}}][\b{R}][\b{R}'] + \coefficient[{\b{r}'}][\b{R}][\b{R}'] = 2}$ among the rays of two adjacent maximal cones~$\R_{\ge0}\b{R}$ and~$\R_{\ge0}\b{R}'$ of~$\Fan[G]$ with~$\b{R} \ssm \{\b{r}\} = \b{R}' \ssm \{\b{r}'\}$.
\end{proposition}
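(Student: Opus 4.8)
The plan is to reduce the statement to the simple, simplicial case already settled in \cref{prop:characterizationPolytopalFan}, using the description of deformations as convex piecewise-linear support functions (one of the equivalent descriptions of deformations recalled in the introduction). A deformation of~$\polytope{P}$ is exactly a polytope whose normal fan coarsens~$\Fan$, equivalently a polytope whose support function is convex and \emph{linear on each maximal cone of}~$\Fan$. Since $\Fan[G]$ refines~$\Fan$, any such function is in particular piecewise linear on~$\Fan[G]$, hence, as every cone of~$\Fan[G]$ is simplicial, it is completely determined by its values $\b{h}_{\ray[s]}$ on the rays~$\ray[s] \in \b{S}$; conversely any~$\b{h} \in \R^\b{S}$ determines a unique piecewise-linear function~$h$ on~$\Fan[G]$ by linear interpolation on each maximal cone. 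First I would record this dictionary, so that the problem becomes: characterize those~$\b{h} \in \R^\b{S}$ for which~$h$ is convex and linear on each maximal cone of~$\Fan$, and check that for such~$\b{h}$ the polytope~$\polytope{P}_\b{h} = \set{\b{x} \in \R^d}{\b{G}\b{x} \le \b{h}}$ is precisely the polytope with support function~$h$.

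Next I would analyze a single wall. Let $\R_{\ge0}\rays$ and $\R_{\ge0}\rays'$ be adjacent maximal cones of~$\Fan[G]$ with $\rays \ssm \{\ray\} = \rays' \ssm \{\ray'\}$. Exactly as in the computation behind \cref{prop:characterizationPolytopalFan}, the unique linear dependence $\sum_{\ray[s] \in \rays \cup \rays'} \coefficient[{\ray[s]}][\rays][\rays'] \, \ray[s] = \b{0}$ normalized by $\coefficient[\ray][\rays][\rays'] + \coefficient[\ray'][\rays][\rays'] = 2$ turns the scalar $\sum_{\ray[s] \in \rays \cup \rays'} \coefficient[{\ray[s]}][\rays][\rays'] \, \b{h}_{\ray[s]}$ into a measure of the bend of~$h$ across the shared wall. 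I would spell out that this quantity is $\ge 0$ if and only if $h$ is convex across the wall, and equals~$0$ if and only if $h$ is linear across it. This is the local heart of the argument, but it is the Chapoton--Fomin--Zelevinsky calculation applied verbatim, so nothing new is required here.

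The remaining step is to globalize the local conditions, and this is where the dichotomy of the statement appears. Because $\Fan[G]$ refines~$\Fan$, each maximal cone of~$\Fan[G]$ lies in a unique maximal cone of~$\Fan$, so every adjacency of~$\Fan[G]$ either stays inside one maximal cone of~$\Fan$ or crosses between two of them. For~$h$ to be a genuine deformation of~$\polytope{P}$ it must first be convex across \emph{all} walls of~$\Fan[G]$, giving the inequalities $\ge 0$ for every adjacency; it must in addition be linear on each maximal cone~$\polytope{C}$ of~$\Fan$, and I claim this last condition is equivalent to~$h$ being flat across every wall of~$\Fan[G]$ interior to~$\polytope{C}$, that is, to the equalities for the same-cone adjacencies. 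The nontrivial direction here---that flatness across all interior walls forces global linearity on~$\polytope{C}$---follows from the connectivity of the dual graph of the simplicial subdivision of~$\polytope{C}$ induced by~$\Fan[G]$: a linear determination propagates from one maximal cell to an adjacent one across a flat interior wall, and hence over all of~$\polytope{C}$. The walls crossing between distinct maximal cones of~$\Fan$ lie on the walls of~$\Fan$ itself and need only satisfy convexity, which is why they contribute inequalities rather than equalities.

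The main obstacle I anticipate is precisely this globalization: one must verify that the interior walls of each maximal cone of~$\Fan$ are exactly the same-cone adjacencies of~$\Fan[G]$, that the dual graph of the induced subdivision is connected, and that flatness across an interior wall genuinely extends a linear determination from one cell to its neighbour. Once this is in place, combining it with the single-wall analysis shows that~$\b{h}$ satisfies the listed equalities and inequalities if and only if~$h$ is a convex function that is linear on each maximal cone of~$\Fan$, if and only if~$\polytope{P}_\b{h}$ is a deformation of~$\polytope{P}$; and the assignment $\b{h} \mapsto \polytope{P}_\b{h}$ is then the desired linear isomorphism onto~$\deformationCone(\polytope{P})$, as noted before the statement.
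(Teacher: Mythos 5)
Your proof is correct and follows essentially the approach the paper itself relies on: the paper gives no proof of \cref{prop:non-simplicial} but defers to \cite[Prop.~3]{PilaudSantos-quotientopes} and \cite[Prop.~1.7]{PadrolPaluPilaudPlamondon}, whose arguments are exactly your dictionary between deformations of~$\polytope{P}$ and convex piecewise-linear functions on the simplicial refinement~$\Fan[G]$, with the wall-crossing quantities of \cref{prop:characterizationPolytopalFan} encoding convexity across each wall and flatness across walls interior to a maximal cone of~$\Fan$ (propagated via connectedness of the dual graph of the induced subdivision) encoding linearity on that cone. The two facts you flag as obstacles --- that local convexity across all walls of a complete simplicial fan implies global convexity, and that the dual graph of the subdivision of each maximal cone of~$\Fan$ is connected so that flat interior walls force a single linear piece --- are precisely the standard ingredients used in those references, so no genuinely new step is missing.
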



\subsection{Graphical zonotopes}
\label{subsec:graphicalZonotopes}

Let $G \eqdef (V,E)$ be a graph with vertex set $V$ and edge set~$E$. 
The \defn{graphical arrangement}~$\gArr$ is the arrangement of the hyperplanes $\set{\b{x} \in \R^V}{\b{x}_u = \b{x}_v}$ for all edges ${\{u,v\} \in E}$. 
It induces the \defn{graphical fan} $\gFan$ whose cones are all the possible intersections of one of the sets~$\set{\b{x} \in \R^V}{\b{x}_u = \b{x}_v}$, $\set{\b{x} \in \R^V}{\b{x}_u \ge \b{x}_v}$, or $\set{\b{x} \in \R^V}{\b{x}_u \le \b{x}_v}$ for each edge $\{u,v\}\in E$. 
The lineality of $\gFan$ is the subspace~$\cc$ of~$\R^V$ spanned by the characteristic vectors of the connected components of~$G$.

The \defn{graphical zonotope}~$\gZono$ is the Minkowski sum of the line segments $[\b{e}_u,\b{e}_v]$ in~$\R^V$ for all edges ${\{u,v\}\in E}$.
Here, $(\b{e}_v)_{v \in V}$ denotes the canonical basis of~$\R^V$. Note that~$\gZono$ lies in a subspace orthogonal to~$\cc$. The graphical fan~$\gFan$ is the normal fan of the graphical zonotope~$\gZono$.

The following result is well-known. For example, it can be easily deduced from \cite[Prop.~2.5]{Stanley2007} or \cite{OrientedMatroids} (for the latter, see that the graphical matroid from Sec.~1.1 is realized by the graphical arrangement, and use the description of the cells of the arrangement in terms of covectors from Sec.~1.2(c)).

An \defn{ordered partition} $(\mu,\omega)$ of~$G$ consists of a partition $\mu$ of $V$ where each part induces a connected subgraph of~$G$, together with an acyclic orientation $\omega$ of the quotient graph $G/\mu$. We say that $(\mu,\omega)$ refines $(\mu',\omega')$ if each part of $\mu$ is contained in a part of $\mu'$ and the orientations are compatible; that is, for all $u,v\in V$ if there is a directed path in $\omega$ between the parts of $\mu$ respectively containing $u$ and $v$, then there is a directed path in $\omega'$ between the parts of $\mu'$ respectively containing $u$ and $v$. 

\begin{proposition} 
\label{prop:facesGraphicalZonotope}
The face lattice of~$\gFan$ is antiisomorphic to the lattice of ordered partitions of~$G$ ordered by refinement.
Explicitly, the antiisomorphism is given by the map that associates the ordered partition $(\mu,\omega)$ to the cone $\opcone$ defined by the inequalities $\b{x}_u \leq \b{x}_v$ for all $u,v\in V$ such that there is a directed path in $\omega$ from the part containing~$u$ to the part containing~$v$ (in particular, $\b{x}_u = \b{x}_v$ if $u,v$ are in the same part of~$\mu$).
\end{proposition}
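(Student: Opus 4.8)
The plan is to construct the antiisomorphism explicitly and then check that it reverses the refinement order, working directly with the cones of the graphical fan~$\gFan$ (the normal fan of~$\gZono$) rather than with the faces of the zonotope. Recall that each cone of~$\gFan$ is cut out by choosing, for every edge $\{u,v\} \in E$, one of the relations $\b{x}_u = \b{x}_v$, $\b{x}_u \le \b{x}_v$ or $\b{x}_u \ge \b{x}_v$, and that the relative interior of such a cone realizes a fixed sign pattern on all the edges. A cone of~$\gFan$ is thus determined by this edge sign pattern, which is the covector of the arrangement~$\gArr$.

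First I would define the forward map~$\Phi$, sending a cone~$\polytope{C}$ of~$\gFan$ to an ordered partition. Choosing a point~$\b{x}$ in the relative interior of~$\polytope{C}$, let $E_=$ be the set of edges $\{u,v\}$ with $\b{x}_u = \b{x}_v$, let $\mu$ be the partition of~$V$ into the connected components of the subgraph $(V, E_=)$, so that each part induces a connected subgraph of~$G$ by construction, and orient each remaining edge $\{u,v\}$ by $[u] \to [v]$ whenever $\b{x}_u < \b{x}_v$. I would then check that this descends to a well-defined orientation~$\omega$ of~$G/\mu$ (two edges joining the same pair of parts receive the same direction since their endpoints share coordinates), that it has no loops (any edge inside a part is an equality edge), and that it is acyclic (a directed cycle would force a strict chain $\b{x}_{v_1} < \dots < \b{x}_{v_1}$). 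Since this pattern is constant on the relative interior of~$\polytope{C}$, the map~$\Phi$ is well defined.

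Next I would define the candidate inverse~$\Psi$ sending $(\mu,\omega)$ to the cone~$\opcone$ of the statement. By transitivity of~$\le$, the inequalities coming from directed $\omega$-paths are generated by those coming from single directed edges together with the equalities inside the parts; hence $\opcone$ is an intersection of defining hyperplanes and halfspaces of~$\gArr$, so it is indeed a cone of~$\gFan$. The heart of the argument is the identity $\Phi \circ \Psi = \mathrm{id}$, which amounts to exhibiting a point in the relative interior of~$\opcone$ realizing equality exactly on the within-part edges and strict inequality on every oriented edge. Here I would use acyclicity of~$\omega$: assigning to each part the length of a longest directed $\omega$-path ending at it (a topological potential) and giving every vertex the value of its part produces such a point, since the potential strictly increases along each directed edge. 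As each part is connected in~$G$, its internal edges all lie in~$E_=$, so~$\Phi$ recovers exactly~$\mu$ and~$\omega$; the reverse identity $\Psi \circ \Phi = \mathrm{id}$ is then immediate since a cone of~$\gFan$ is determined by its edge sign pattern.

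Finally I would verify order-reversal. If $(\mu,\omega)$ refines $(\mu',\omega')$, then by the compatibility clause every $\omega$-path yields an $\omega'$-path, so every defining inequality of~$\opcone$ occurs among those of~$\opcone[\mu',\omega']$, whence $\opcone[\mu',\omega'] \subseteq \opcone$. For the converse, I would invoke the standard fact that, in a fan, a cone contained in another is a face of it: the inclusion $\opcone[\mu',\omega'] \subseteq \opcone$ forces the edge sign pattern of the larger cone to be a coarsening of that of the smaller one, which is precisely the statement that $(\mu,\omega)$ refines $(\mu',\omega')$. Being an inclusion-reversing bijection between these two finite lattices, $\Phi$ is then the desired antiisomorphism. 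I expect the main obstacle to be the relative-interior realizability step in $\Phi \circ \Psi = \mathrm{id}$, namely producing a single point that simultaneously makes every prescribed strict inequality strict; this is exactly where acyclicity of~$\omega$ is essential, and it is what separates genuine ordered partitions from inconsistent edge sign patterns.
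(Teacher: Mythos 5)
Your argument is correct, but you should know that the paper never actually proves \cref{prop:facesGraphicalZonotope}: it records the result as well known, pointing to \cite[Prop.~2.5]{Stanley2007} and to the covector description of the cells of the graphical arrangement in \cite{OrientedMatroids}. Your proof is in effect a self-contained unpacking of that second reference. The ``edge sign patterns'' you work with are precisely the covectors of $\gArr$; your map $\Phi$ reads off the ordered partition encoded by a covector, and your longest-path potential is the standard certificate that, conversely, every ordered partition $(\mu,\omega)$ is realized by an actual point of $\R^V$ --- you correctly identify this realizability step as the crux, and acyclicity of $\omega$ is exactly what makes it work. Two details would need spelling out in a full write-up. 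First, with the paper's definition of $\gFan$ (all intersections of one closed condition per edge), one needs the standard fact that each such intersection is the closure of a single cell of $\gArr$ and that its relative interior is that cell with constant sign pattern; this is what makes $\Phi$ well defined and also what upgrades your constructed point (which realizes the full-support sign pattern allowed in $\opcone$) to a point of the relative interior of $\opcone$. Second, the wording of your final paragraph is inverted: the inclusion $\opcone[{\mu',\omega'}] \subseteq \opcone$ forces the sign pattern of the \emph{smaller} cone to arise from that of the \emph{larger} one by turning some strict signs into equalities, and it is this relation between sign vectors that translates into ``$(\mu,\omega)$ refines $(\mu',\omega')$''; the fact you invoke (in a fan, a cone contained in another cone is a face of it) yields exactly this. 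In sum, your route buys a self-contained elementary proof where the paper settles for a citation, at the price of redoing a piece of standard hyperplane-arrangement theory.
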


Some easy consequences of \cref{prop:facesGraphicalZonotope} are:
\begin{itemize}
\item The maximal cones of~$\gFan$ are in bijection with the acyclic orientations of~$G$. We denote by~$\orcone$ the maximal cone of~$\gFan$ associated to the acyclic orientation~$\omega$.
\item The minimal cones of $\gFan$, that is the rays of $\gFan/\cc$, are in bijection with the \defn{biconnected subsets} of~$G$, \ie non-empty subsets of~$V$ spanning connected subgraphs whose complements in their connected components are also non-empty and connected. 
\item The rays of $\gFan/\cc$ that belong to the maximal cone associated to an acyclic orientation are the biconnected subsets which form an upper set of the acyclic orientation (hence, they are in bijection with the minimal directed cuts of the acyclic orientation).
\item Similarly, the rays of $\gFan/\cc$ that belong to the cone associated to an ordered partition $(\mu,\omega)$ are the biconnected sets that contracted by $\mu$ give rise to an upper set of~$\omega$.  
 \end{itemize}

Note that the natural embedding of a graphical fan~$\gFan$ is not essential, as it has a lineality given by its connected components. This is why we cannot directly talk about the rays of the fan in the enumeration above. The usual solution to avoid this is to consider the quotient by the subspace~$\cc$. However, this subspace depends on the graph, and with such a quotient we would lose the capacity of uniformly treating all the graphs with a fixed vertex set. 
We will instead work with the natural non-essential embedding, together with a collection of vectors supporting simultaneously all graphical fans.

\begin{example}\label{ex:complete}
When $G$ is the complete graph~$K_n$, the graphical zonotope is the \defn{permutahedron}. The graphical fan is the \defn{braid fan}~$\braid$, induced by the \defn{braid arrangement} consisting of the hyperplanes $\set{\b{x} \in \R^n}{\b{x}_i = \b{x}_j}$ for all $1\leq i<j\leq n$. Its lineality is spanned by the all-ones vector~$\ones \eqdef (1, \dots, 1)$. 
Since all the subsets of~$[n]\eqdef \{1,2,\dots,n\}$ are biconnected in~$K_n$, the face lattice of~$\braid$ is isomorphic to the lattice of ordered partitions of~$[n]$. The rays of~$\braid/\ones$ correspond to proper subsets of~$[n]$, and its maximal cells are in bijection with permutations of~$[n]$. Each maximal cell is the positive hull of the $n-1$ rays corresponding to the proper upper sets of the order given by the permutation. In particular, $\braid/\ones$ is a simplicial fan. 
\end{example}


\section{Graphical deformation cones}
\label{sec:graphicalTypeCones}

Our main result is an irredundant facet description of the deformation cone of~$\gZono$ for every graph~$G \eqdef (V,E)$. Our starting point is \cref{prop:redundant}, which gives a (possibly redundant) description derived from \cref{prop:non-simplicial}. 
It is strongly based on the fact that the braid fan simultaneously refines all the graphical fans. 
Note however that the braid fan is not simplicial (due to its lineality).
The classical approach to overcome this issue is to quotient the braid fan by its lineality space.
However, we prefer to triangulate the braid fan, since it simplifies the presentation of the proof.


\subsection{A first polyhedral description}
\label{subsec:firstDescription}

Associate to each subset $S \subseteq V$ the vector
\[
{\ivector{S} \eqdef  \sum_{v \in S} \b{e}_v  - \sum_{v \notin  S} \b{e}_v}.
\]
This is essentially the characteristic vector of $S$, but it has the advantage that $\ivector{V} = \ones[V]$ and $\ivector{\varnothing} = -\ones[V]$ positively span the line $\ones[V]\mathbb{R}$, which is the lineality~$\cc[K_V]$ of the braid fan.

\begin{lemma}
For any ordered partition~$(\mu,\omega)$ of a graph~$G \eqdef (V,E)$, we have
\[
\opcone=\cone\set{\ivector{S}}{S\subseteq V \text{ upper set of } \omega}.
\]
\end{lemma}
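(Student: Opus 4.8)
The plan is to identify $\opcone$ with the order cone of the poset that $\omega$ induces on the parts of $\mu$, and then to decompose each of its elements along level sets. First I would unfold the definition coming from \cref{prop:facesGraphicalZonotope}: the cone $\opcone$ is cut out by the inequalities $\b{x}_u \le \b{x}_v$ for all $u,v \in V$ such that $\omega$ has a directed path from the part of $\mu$ containing $u$ to the part containing $v$. Applying this in both directions when $u,v$ lie in a common part shows that every $\b{x} \in \opcone$ is constant on each part of $\mu$, so $\b{x}$ descends to a function on $G/\mu$ that is monotone along $\omega$.

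For the inclusion $\supseteq$, I would check directly that $\ivector{S} \in \opcone$ for every upper set $S$ of $\omega$. Writing $(\ivector{S})_w = +1$ if $w \in S$ and $(\ivector{S})_w = -1$ otherwise, the only way a defining inequality $\b{x}_u \le \b{x}_v$ could fail would be if $u \in S$ while $v \notin S$. But a directed path from the part of $u$ to the part of $v$, together with the fact that $S$ is an upper set and hence a union of parts whose image in $G/\mu$ is an upper set of $\omega$, forces $v \in S$ as soon as $u \in S$. Thus each $\ivector{S}$ satisfies all the inequalities, giving $\cone\set{\ivector{S}}{S \text{ upper set of } \omega} \subseteq \opcone$.

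The main direction is $\subseteq$, and the key tool is a layer-cake decomposition by level sets. Given $\b{x} \in \opcone$ with distinct coordinate values $c_1 < \dots < c_m$, I would set $U_{\ge t} \eqdef \set{w \in V}{\b{x}_w \ge t}$ and observe that each $U_{\ge c_k}$ is an upper set of $\omega$: it is a union of parts because $\b{x}$ is constant on parts, and it is upward closed because a directed path from the part of $u$ to the part of $v$ gives $\b{x}_u \le \b{x}_v$, so $\b{x}_u \ge t$ implies $\b{x}_v \ge t$. Telescoping then yields $\b{x} = c_1 \ones[V] + \sum_{k=2}^m (c_k - c_{k-1}) \one_{U_{\ge c_k}}$, where each $c_k - c_{k-1} > 0$ and $U_{\ge c_1} = V$.

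Finally I would convert this expression into a nonnegative combination of the $\ivector{S}$. Since $\ivector{V} = \ones[V]$ and $\ivector{\varnothing} = -\ones[V]$, and both $V$ and $\varnothing$ are upper sets, the cone already contains the whole line $\R \ones[V]$; moreover $\one_S = \tfrac12(\ivector{S} + \ivector{V})$ for every $S$. Substituting these identities rewrites the telescoping sum as a nonnegative combination of the generators $\ivector{U_{\ge c_k}}$ and $\ivector{V}$, with the possibly negative term $c_1 \ivector{V}$ replaced by $|c_1|\,\ivector{\varnothing}$ when $c_1 < 0$. This proves $\b{x} \in \cone\set{\ivector{S}}{S \text{ upper set of } \omega}$. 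I expect the only delicate point to be precisely this bookkeeping with the lineality: using $\ivector{S}$ rather than the plain characteristic vectors $\one_S$ is what makes $\pm\ones[V]$ available and, more generally, recovers the correct lineality $\cc$, so that negative coordinate values cause no trouble.
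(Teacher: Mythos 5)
Your proof is correct and complete. Note that the paper itself supplies no proof of this lemma: it is stated as an immediate consequence of \cref{prop:facesGraphicalZonotope} and implicitly left to the reader, so there is no written argument to compare yours against --- what you give is precisely the standard argument the paper is relying on. Both directions check out. For $\supseteq$, the verification that each $\ivector{S}$ satisfies every defining inequality $\b{x}_u \le \b{x}_v$ of $\opcone$ uses exactly the upper-set property (with the correct orientation convention: a directed path from the part of $u$ to the part of $v$ forces $v \in S$ whenever $u \in S$). For $\subseteq$, the level-set decomposition $\b{x} = c_1\ones[V] + \sum_{k\ge 2}(c_k - c_{k-1})\,\one_{U_{\ge c_k}}$ is valid, each $U_{\ge c_k}$ is indeed a union of parts of $\mu$ forming an upper set of $\omega$, and the identities $\one_S = \tfrac12\bigl(\ivector{S}+\ivector{V}\bigr)$ and $\ivector{\varnothing} = -\ivector{V}$ convert this into a genuinely nonnegative combination of the stated generators; the observation that $\varnothing$ and $V$ are always upper sets is exactly what absorbs a possibly negative coefficient on $\ones[V]$, and matches the paper's remark that this accounts for the lineality of $\gFan$. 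The only stylistic point is that your final bookkeeping could be stated more cleanly by first collecting the total coefficient of $\ivector{V}$, namely $\tfrac12(c_1+c_m)$, and then flipping it onto $\ivector{\varnothing}$ if negative, but your version (replacing $c_1\ivector{V}$ by $|c_1|\,\ivector{\varnothing}$ when $c_1<0$ and keeping the remaining nonnegative terms) is equally valid since both $\ivector{V}$ and $\ivector{\varnothing}$ are generators.
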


Here, we mean that~$S$ is an upper set of~$\omega$ when contracted by~$\mu$.
Note that $\varnothing$ and $V$ are always upper sets, which is consistent with the fact that the lineality of~$\gFan$ always contains the line spanned by~$\ones[V]$.

We will work with a refined version~$\sbraid[V]$ of the braid fan whose maximal cells are 
\[
\opcone[\sigma]^\varnothing \eqdef \cone\set{\ivector{S}}{S\subsetneq V\text{ upper set of }\sigma}
\quad\text{and}\quad
\opcone[\sigma]^V \eqdef \cone\set{\ivector{S}}{\varnothing\neq S\subseteq V\text{ upper set of }\sigma}
\]
for every acyclic orientation of $K_{V}$, which we identify with a permutation~$\sigma$ of~$V$. An example is depicted in \cref{fig:deformedbraidfan}.
The following two immediate statements are left to the reader.

\begin{lemma}\label{lem:braidcircuits}For any finite set~$V$:
\begin{enumerate}[(i)]
\item The fan~$\sbraid[V]$ is an essential complete simplicial fan in~$\R^V$ supported on the $2^{|V|}$ vectors~$\ivector{S}$ for $S\subseteq V$. 
 
\item For any permutation~$\sigma$, the maximal cones $\opcone[\sigma]^\varnothing$ and $\opcone[\sigma]^V$ are adjacent, and the unique linear relation supported on the rays of $\opcone[\sigma]^\varnothing\cup\opcone[\sigma]^V$ is
\(
\ivector{\varnothing} + \ivector{V} = \b{0}.
\)

\item The other pairs of adjacent maximal cells are of the form $\opcone[\sigma]^X$ and $\opcone[\sigma']^X$, where $X\in\{\varnothing,V\}$ and $\sigma=PuvS$ and $\sigma'=PvuS$ are permutations that differ in the inversion of two consecutive elements.
The two rays that are not shared by~$\opcone[\sigma]^X$ and~$\opcone[\sigma']^X$ are $\ivector{S\cup \{u\}}$ and $\ivector{S\cup \{v\}}$, and the unique linear relation supported on the rays of~$\opcone[\sigma]^X\cup\opcone[\sigma']^X$ is given by 
\[
\ivector{S \cup \{u\}} + \ivector{S \cup \{v\}} = \ivector{S} + \ivector{S \cup \{u, v\}}.
\]
\end{enumerate}
\end{lemma}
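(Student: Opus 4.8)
The plan is to settle (i) by combining the previous lemma with one linear-algebra computation, and then to read (ii) and (iii) off the resulting simplicial structure. Write $n \eqdef |V|$, identify a permutation $\sigma$ with the linear order $\sigma_1, \dots, \sigma_n$ whose proper nonempty upper sets are the prefixes $U_k \eqdef \{\sigma_1, \dots, \sigma_k\}$ for $1 \le k \le n-1$, and record the one identity that drives everything: since $\ivector{S}$ has entry $+1$ on $S$ and $-1$ elsewhere, consecutive prefixes satisfy $\ivector{U_k} - \ivector{U_{k-1}} = 2\b{e}_{\sigma_k}$, while $\ivector{\varnothing} = -\ivector{V} = -\ones[V]$.

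For (i) I would first show each maximal cone is simplicial and full dimensional. The generators $\ivector{\varnothing}, \ivector{U_1}, \dots, \ivector{U_{n-1}}$ of $\opcone[\sigma]^\varnothing$ span $\R^V$: telescoping $\ivector{U_k} - \ivector{U_{k-1}} = 2\b{e}_{\sigma_k}$ recovers $\b{e}_{\sigma_1}, \dots, \b{e}_{\sigma_{n-1}}$, and $\ivector{\varnothing} = -\ones[V]$ then yields $\b{e}_{\sigma_n}$; the case of $\opcone[\sigma]^V$ is identical. Hence every maximal cone is a pointed full-dimensional simplicial cone, and as $S$ runs over all subsets the vectors $\ivector{S}$ run over all $2^n$ sign vectors, so $\sbraid[V]$ is supported on exactly these $2^n$ vectors. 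For the global structure I would use the previous lemma: $\orcone[\sigma] = \cone\set{\ivector{S}}{S \text{ upper set of } \sigma}$ is generated by $\ivector{\varnothing}, \ivector{U_1}, \dots, \ivector{U_{n-1}}, \ivector{V}$, so using $\ivector{\varnothing} = -\ivector{V}$ every point of $\orcone[\sigma]$ is a nonnegative combination of the $\ivector{U_k}$ together with a single (positive or negative) multiple of $\ivector{V}$, and therefore lies in $\opcone[\sigma]^V$ or in $\opcone[\sigma]^\varnothing$. A short uniqueness-of-coordinates argument in the two simplicial cones shows they meet exactly along the common facet $\cone\set{\ivector{U_k}}{1 \le k \le n-1}$. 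Thus $\sbraid[V]$ arises from $\braid[V]$ by splitting its lineality line $\ones[V]\R$ into the two opposite rays $\R_{\ge 0}\ivector{V}$ and $\R_{\ge 0}\ivector{\varnothing}$; granting the fan property (next paragraph), completeness is inherited from $\braid[V]$, and essentiality holds because no cone contains both $\ivector{\varnothing}$ and $\ivector{V}$, hence none contains a line.

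The one point that genuinely needs care --- and the main obstacle --- is the face-to-face property, i.e.\ that this consistent splitting yields a fan rather than merely a cover of $\R^V$. I would argue it combinatorially: the cones of $\sbraid[V]$ are precisely the cones $\cone\set{\ivector{S}}{S \in C}$ over chains $C$ of the Boolean lattice $2^V$ containing at most one of $\varnothing$ and $V$ (the maximal such chains being the generating sets of the $\opcone[\sigma]^X$), and one checks that the intersection of two of these cones is the cone over the intersection of the underlying chains. Equivalently and more geometrically, the splitting is governed by the globally defined continuous piecewise-linear function $\b{x} \mapsto \max_v \b{x}_v + \min_v \b{x}_v$, which is linear on each braid cone (equal to $\b{x}_{\sigma_1} + \b{x}_{\sigma_n}$ on $\orcone[\sigma]$) and whose zero locus simultaneously realizes all the separating facets; refining a fan by the sign of such a function produces a fan. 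In either formulation, compatibility across the shared walls of adjacent braid cones is automatic because the cut is induced by data --- $\ivector{V}, \ivector{\varnothing}$, respectively the global function --- that does not depend on the cone.

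Finally I would read off the circuits in (ii) and (iii). In (ii) the cones $\opcone[\sigma]^\varnothing$ and $\opcone[\sigma]^V$ share the facet $\cone\set{\ivector{U_k}}{1 \le k \le n-1}$ found above, their only non-shared rays are $\ivector{\varnothing}$ and $\ivector{V}$, and since these span the split lineality line the unique dependence among the $n+1$ rays is forced to be $\ivector{\varnothing} + \ivector{V} = \b{0}$ (normalization $1 + 1 = 2$). In (iii), two maximal cones in distinct braid cones can share an $(n-1)$-facet only if those braid cones are adjacent, i.e.\ $\sigma$ and $\sigma'$ differ by the transposition of two consecutive entries $u,v$, and only if they carry the same superscript $X$ --- opposite superscripts meet in codimension two --- which both matches the asserted form and shows these exhaust the remaining adjacencies. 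Writing $S$ for the set of entries preceding the swapped pair, all upper sets agree except $S \cup \{u\}$ (for $\sigma$) and $S \cup \{v\}$ (for $\sigma'$), so the non-shared rays are $\ivector{S \cup \{u\}}$ and $\ivector{S \cup \{v\}}$, and the entrywise identity $\mathbbm{1}_{S \cup \{u\}} + \mathbbm{1}_{S \cup \{v\}} = \mathbbm{1}_{S} + \mathbbm{1}_{S \cup \{u,v\}}$ immediately gives $\ivector{S \cup \{u\}} + \ivector{S \cup \{v\}} = \ivector{S} + \ivector{S \cup \{u,v\}}$; this is the unique circuit because the $n+1$ rays of two adjacent full-dimensional simplicial cones carry a one-dimensional space of dependences, and its normalization is again $1 + 1 = 2$ on the two non-shared rays. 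In the boundary cases $S = \varnothing$ or $S \cup \{u,v\} = V$, one of the four sets equals $\varnothing$ or $V$ and I would rewrite the identity via $\ivector{\varnothing} = -\ivector{V}$ so that it is supported on the genuine rays of the two cones, a substitution that leaves the normalization unchanged.
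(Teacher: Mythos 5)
Your proof is correct, and there is nothing in the paper to compare it against: the authors explicitly leave \cref{lem:braidcircuits} to the reader as an ``immediate statement,'' so your write-up supplies an argument the paper omits. Your route --- proving that each $\opcone[\sigma]^X$ is a full-dimensional simplicial cone via the telescoping identity $\ivector{U_k}-\ivector{U_{k-1}}=2\b{e}_{\sigma_k}$, obtaining the face-to-face property by splitting each braid cone along the sign of the global continuous piecewise-linear function $\b{x}\mapsto\max_v\b{x}_v+\min_v\b{x}_v$, and then reading off the two circuits by dimension counting --- is a clean and complete way to do this, and you correctly isolate the fan property as the only point of real substance (your combinatorial alternative via chains of $2^V$ hides that same work in ``one checks,'' so the geometric argument is the one to keep). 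Three small repairs. First, your stated justification of essentiality (``no cone contains both $\ivector{\varnothing}$ and $\ivector{V}$, hence none contains a line'') is not a valid inference by itself, since a line could point in any direction; it is also unnecessary, because simplicial cones are pointed and you have already established simpliciality. Second, the principle ``cutting a complete fan by the sign of a piecewise-linear function that is linear on each cone yields a fan'' is true but deserves its two-line verification: writing $f|_C=\langle \b{g}_C,\cdot\rangle$ and $C^{\pm}\eqdef C\cap\{\pm f\ge 0\}$, for cones $C,D$ with common face $E$ one has $C^+\cap D^+=E\cap\{f\ge 0\}$ and $C^+\cap D^-=E\cap\{f=0\}$, and these are faces of $C^{\pm}$ and $D^{\pm}$ because any supporting hyperplane of a cone supports each of its subcones, while $\{f=0\}$ meets $C^+$ in the face $C^+\cap \b{g}_C^{\perp}$. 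Third, you take upper sets of $\sigma$ to be prefixes, whereas the paper's notation $\sigma=PuvS$, $\sigma'=PvuS$ takes them to be suffixes (consistently with \cref{prop:facesGraphicalZonotope}); this mirror-image convention is harmless but should be aligned with the statement, since your ``$S=$ entries preceding the swapped pair'' is the paper's ``$S=$ entries following it.'' Your handling of the boundary cases $S=\varnothing$ and $S\cup\{u,v\}=V$ via the substitution $\ivector{\varnothing}=-\ivector{V}$ is correct, and is in fact more careful than the statement of the lemma itself.
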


\begin{figure}
	\includegraphics[width=.3\linewidth]{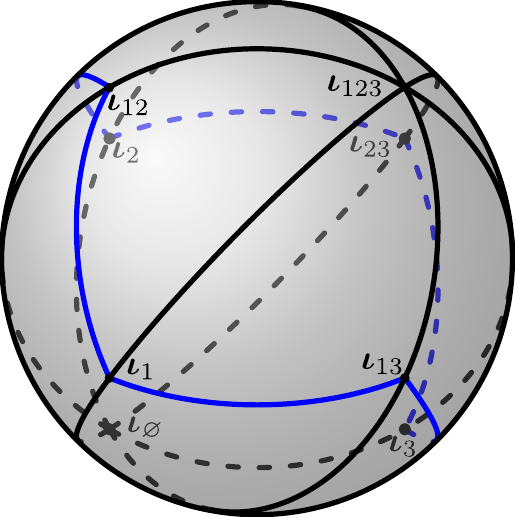}
	\caption{The fan~$\sbraid[123]$ intersected with the unit sphere. (For brevity, here and in the labels we write $123$ to denote the set $\{1,2,3\}$, and so on.) The braid fan~$\braid[123]$ is the Cartesian product of a regular hexagonal fan with a line. To obtain~$\sbraid[123]$, each maximal cell is divided into two simplicial cells, one containing~$\ivector{\varnothing}$ and one containing~$\ivector{123}$. }
	\label{fig:deformedbraidfan}
\end{figure}

\begin{lemma}\label{lem:braidrefining}
For any graph~$G \eqdef (V,E)$: 
\begin{enumerate}[(i)]
\item The fan $\sbraid[V]$ is a simplicial refinement of the graphical~fan~$\gFan$.
 
\item For an acyclic orientation~$\omega$ of~$G$ and $S\subseteq V$, we have $\ivector{S}\in \orcone$ if and only if $S$ is an upper set of $\omega$.

\item For an acyclic orientation $\sigma$ of $K_V$ and $X\in\{\varnothing,V\}$ we have $\orcone[\sigma]^X\subseteq\orcone$ if and only if~$\sigma$ is a linear extension of~$\omega$.
\end{enumerate}
\end{lemma}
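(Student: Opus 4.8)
The plan is to prove the three parts in the logical order (ii), (iii), (i): part (ii) is a direct computation, part (iii) follows from it together with an elementary fact about linear extensions, and part (i) is then a short consequence. Throughout, the only thing requiring genuine care is matching the orientation and upper-set conventions, so I would fix these at the outset and check them against \cref{prop:facesGraphicalZonotope}.

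For (ii), I would invoke \cref{prop:facesGraphicalZonotope}, which describes the maximal cone as $\orcone=\set{\b{x}\in\R^V}{\b{x}_u\le\b{x}_v \text{ whenever } u\to v \text{ in }\omega}$. Since the coordinates of $\ivector{S}$ take only the values $+1$ (on $S$) and $-1$ (off $S$), the inequality $(\ivector{S})_u\le(\ivector{S})_v$ fails for a directed edge $u\to v$ precisely when $u\in S$ and $v\notin S$. Hence $\ivector{S}\in\orcone$ if and only if no directed edge of $\omega$ leaves $S$, which is exactly the statement that $S$ is an upper set of $\omega$. Here I would double-check that ``upper set'' indeed means ``closed under following edges forward'', so that this lines up both with \cref{prop:facesGraphicalZonotope} and with the earlier description of the rays of $\gFan$.

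For (iii), the key observation is that $\orcone$ is a convex cone, so it contains $\orcone[\sigma]^X=\cone\set{\ivector{S}}{\dots}$ if and only if it contains every generator $\ivector{S}$. The generators $\ivector{\varnothing}$ and $\ivector{V}$ always lie in $\orcone$, since $\varnothing$ and $V$ are trivially upper sets of $\omega$; so by (ii) the containment $\orcone[\sigma]^X\subseteq\orcone$ reduces, in both cases $X\in\{\varnothing,V\}$, to the condition that every proper nonempty upper set of $\sigma$ is an upper set of $\omega$. As the upper sets of the total order $\sigma$ are its final segments, it then remains to prove the elementary combinatorial fact that all final segments of $\sigma$ are upper sets of $\omega$ if and only if $\sigma$ is a linear extension of $\omega$ (i.e.\ every directed edge of $\omega$ points forward along $\sigma$). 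The forward direction is immediate; for the converse, if some edge $u\to v$ of $\omega$ has $v$ before $u$ in $\sigma$, then the final segment of $\sigma$ beginning at $u$ contains $u$ but not $v$ and hence is not an upper set of $\omega$. This gives (iii).

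Finally, for (i): $\sbraid[V]$ is simplicial by \cref{lem:braidcircuits}(i), so only the refinement needs proof. Given any maximal cone $\orcone[\sigma]^X$ of $\sbraid[V]$, I would orient each edge of $G$ from its $\sigma$-earlier to its $\sigma$-later endpoint; this yields an acyclic orientation $\omega$ of which $\sigma$ is a linear extension, so (iii) gives $\orcone[\sigma]^X\subseteq\orcone$. Thus every maximal cone of $\sbraid[V]$ lies in a maximal cone of $\gFan$, and since both fans are complete this shows $\sbraid[V]$ refines $\gFan$. (Alternatively, (i) follows from transitivity of refinement: the graphical arrangement $\gArr$ is a subarrangement of the braid arrangement, so $\braid[V]$ refines $\gFan$, while $\sbraid[V]$ refines $\braid[V]$ by construction since $\opcone[\sigma]=\opcone[\sigma]^\varnothing\cup\opcone[\sigma]^V$.) The main obstacles are therefore not deep: they are keeping the orientation/upper-set conventions consistent, and, for the last step, the standard fact that a complete simplicial fan whose maximal cones each sit inside a cone of another complete fan must refine it.
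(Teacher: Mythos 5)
Your proposal is correct: the paper offers no proof of this lemma (it is explicitly ``left to the reader'' as immediate), and your argument---checking $\ivector{S}\in\orcone$ coordinate-wise against the inequality description from \cref{prop:facesGraphicalZonotope}, reducing the cone containment in (iii) to containment of generators plus the final-segment/linear-extension equivalence, and deducing (i) from (iii) together with completeness of both fans---is exactly the intended straightforward verification. The only steps you leave as ``standard'' (that a complete fan whose maximal cones lie in cones of another complete fan refines it, and the convention match between upper sets and the orientation of edges) are genuinely routine and are handled correctly.
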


We are now ready to describe the deformation cone of the graphical zonotope~$\gZono$.
For any $\b{h} \in \R^{2^V}$, let~$\dZono$ be the deformed permutahedron given by
\[
\dZono \eqdef \Bigset{\b{x} \in \R^V}{\sum_{v \in S} \b{x}_v - \sum_{v \notin S} \b{x}_v \le \b{h}_S \text{ for all } S \subseteq V}.
\]

\begin{proposition}\label{prop:redundant}
For any graph $G \eqdef (V,E)$, the deformation cone~$\deformationCone(\gZono)$ of the graphical zonotope~$\gZono$ is the set of polytopes~$\dZono$ for all~$\b{h}$ in the cone of~$\R^{2^V}$ defined by the following (possibly redundant) 
description:
\begin{itemize}
\item $\b{h}_{\varnothing}=-\b{h}_{V}$,
\item $\b{h}_{S \cup \{u\}} + \b{h}_{S \cup \{v\}} = \b{h}_{S} + \b{h}_{S \cup \{u, v\}}$ for each~$\{u,v\}\in \binom{V}{2}\ssm E$ and~$S\subseteq V\ssm \{u,v\}$, and
\item $\b{h}_{S \cup \{u\}} + \b{h}_{S \cup \{v\}} \geq \b{h}_{S} + \b{h}_{S \cup \{u, v\}}$ for each~$\{u,v\}\in E$ and~$S\subseteq V\ssm \{u,v\}$.
\end{itemize}
\end{proposition}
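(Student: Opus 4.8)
The plan is to apply \cref{prop:non-simplicial} to $\gZono$, whose normal fan $\gFan$ is refined by the essential complete simplicial fan $\sbraid[V]$ supported on the vectors $\ivector{S}$ for $S \subseteq V$ (\cref{lem:braidcircuits}(i) and \cref{lem:braidrefining}(i)). First I would check that the polytope $\polytope{P}_\b{h}$ of \cref{prop:non-simplicial} is exactly $\dZono$: since the rows of the matrix defining it are the supporting vectors $\ivector{S} = \sum_{v \in S} \b{e}_v - \sum_{v \notin S} \b{e}_v$, the inequality $\langle \ivector{S}, \b{x} \rangle \le \b{h}_S$ reads $\sum_{v \in S} \b{x}_v - \sum_{v \notin S} \b{x}_v \le \b{h}_S$, matching the definition of $\dZono$. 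It then remains to translate the equalities and inequalities of \cref{prop:non-simplicial} into the three stated families, by running through all adjacent pairs of maximal cones of $\sbraid[V]$ classified in \cref{lem:braidcircuits}(ii)--(iii), recording their unique linear relation, and deciding for each pair whether the two cones lie in the same or in distinct maximal cones of $\gFan$.

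For this classification, the key point is that a maximal cone $\opcone[\sigma]^X$ of $\sbraid[V]$ lies, by \cref{lem:braidrefining}(iii), in the maximal cone $\orcone$ of $\gFan$ indexed by the acyclic orientation $\omega(\sigma)$ obtained by restricting the total order $\sigma$ to the edges of $G$; in particular the choice $X \in \{\varnothing, V\}$ is irrelevant. Hence two adjacent cones of $\sbraid[V]$ lie in the same maximal cone of $\gFan$ if and only if they induce the same orientation of $G$. For the pair $\opcone[\sigma]^\varnothing$, $\opcone[\sigma]^V$ of \cref{lem:braidcircuits}(ii) this always holds, as they share the permutation $\sigma$; the relation $\ivector{\varnothing} + \ivector{V} = \b{0}$ is already normalized so that the coefficients of the two non-shared rays $\ivector{\varnothing}$ and $\ivector{V}$ sum to $2$, so this pair yields the equality $\b{h}_\varnothing + \b{h}_V = 0$, that is $\b{h}_\varnothing = -\b{h}_V$. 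For a pair $\opcone[\sigma]^X$, $\opcone[\sigma']^X$ of \cref{lem:braidcircuits}(iii) with $\sigma = PuvS$ and $\sigma' = PvuS$, the two total orders agree on every pair except $\{u,v\}$ because $u,v$ are consecutive, so $\omega(\sigma) = \omega(\sigma')$ if and only if $\{u,v\} \notin E$.

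It remains to read off the associated linear form. In both subcases the relation $\ivector{S \cup \{u\}} + \ivector{S \cup \{v\}} = \ivector{S} + \ivector{S \cup \{u,v\}}$ is normalized so that the non-shared rays $\ivector{S \cup \{u\}}$ and $\ivector{S \cup \{v\}}$ carry coefficients summing to $2$, hence the form is $\b{h}_{S \cup \{u\}} + \b{h}_{S \cup \{v\}} - \b{h}_S - \b{h}_{S \cup \{u,v\}}$. When $\{u,v\} \notin E$ the two cones coincide in $\gFan$ and we obtain the equality of the second bullet; when $\{u,v\} \in E$ they lie in distinct cones and we obtain the inequality $\b{h}_{S \cup \{u\}} + \b{h}_{S \cup \{v\}} \ge \b{h}_S + \b{h}_{S \cup \{u,v\}}$ of the third bullet. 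Every prescribed triple $(u,v,S)$ with $S \subseteq V \ssm \{u,v\}$ is realized by such a swap, and the same relation arises from either choice of $X \in \{\varnothing, V\}$ and from every ordering $P$ of $V \ssm (S \cup \{u,v\})$, which is exactly the source of the redundancy. I expect the main obstacle to be precisely this classification of adjacent cells of $\sbraid[V]$ into same-cone versus distinct-cone pairs of $\gFan$; once that combinatorial dichotomy (edge versus non-edge) is pinned down, the rest is the sign- and normalization-bookkeeping imposed by \cref{prop:non-simplicial}.
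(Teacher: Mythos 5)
Your proposal is correct and follows essentially the same route as the paper: both apply \cref{prop:non-simplicial} to the simplicial refinement $\sbraid[V]$ of $\gFan$, use \cref{lem:braidcircuits} to enumerate adjacent maximal cones and their normalized linear relations, and use \cref{lem:braidrefining} to decide the same-cone versus distinct-cone dichotomy, which reduces exactly to whether the swapped consecutive pair $\{u,v\}$ is a non-edge or an edge of $G$. Your explicit checks that $\polytope{P}_{\b{h}} = \dZono$ and that the containing cone of $\opcone[\sigma]^X$ is independent of $X$ are details the paper leaves implicit, but they do not change the argument.
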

 
\begin{proof} Observe first that, as stated in \cref{lem:braidrefining}, $\sbraid[V]$ provides a simplicial refinement of~$\gFan$. 
Following \cref{prop:non-simplicial}, we need to consider all pairs of adjacent maximal cones of~$\sbraid[V]$, and to study which ones lie in the same cone of~$\gFan$. 

Adjacent maximal cones of~$\sbraid[V]$ are described in \cref{lem:braidcircuits}, and the containement relations of the cones of~$\sbraid[V]$ in the cones of~$\gFan$ are described in \cref{lem:braidrefining}.

For any $\sigma$, the cones~$\opcone[\sigma]^\varnothing$ and~$\opcone[\sigma]^V$ belong to the same cell of~$\gFan$. Hence, by \cref{prop:non-simplicial}, the following equation holds in the deformation cone:
\[
\b{h}_{\varnothing}=-\b{h}_{V}.
\]

The remaining pairs of adjacent maximal cones of~$\sbraid[V]$ correspond to pairs of acyclic orientations of $K_V$ differing in a single edge; or equivalently, to  pairs of permutations of~$V$ of the form $\sigma=PuvS$ and $\sigma'=PvuS$.
The unique linear relation supported on the rays of $\orcone[\sigma]^X\cup \orcone[\sigma']^X$ for $X\in\{\varnothing,V\}$ is then 
 \[\ivector{S \cup \{u\}} + \ivector{S \cup \{v\}} = \ivector{S} + \ivector{S \cup \{u, v\}}.\]

We consider first the case when $\{u,v\}\notin E$. Observe that both $\sigma$ and $\sigma'$ induce the same 
acyclic orientation of $G$, which we call~$\omega$. We have then $\orcone[\sigma]^X\cup \orcone[\sigma']^X\subseteq \orcone[\omega]$ by \cref{lem:braidrefining}. 
Therefore, by \cref{prop:non-simplicial} and \cref{lem:braidcircuits}, we have 
\[\b{h}_{S \cup \{u\}} + \b{h}_{S \cup \{v\}} = \b{h}_{S} + \b{h}_{S \cup \{u, v\}}\]
for any $\b{h}$ in~$\deformationCone(\gZono)$.
Note that, for any $\{u,v\}\notin E$ and $S\subset V\ssm\{u,v\}$, we can construct such permutations $\sigma$ and $\sigma'$.
This gives the claimed description of the linear span of~$\deformationCone(\gZono)$.

In contrast, if $\{u,v\}\in E$, then $\sigma$ and $\sigma'$ induce different orientations of~$G$, and hence they belong to different adjacent cones of~$\gFan$ by \cref{lem:braidrefining}. Therefore, by \cref{prop:non-simplicial} and \cref{lem:braidcircuits}, we have 
\[\b{h}_{S \cup \{u\}} + \b{h}_{S \cup \{v\}} \geq  \b{h}_{S} + \b{h}_{S \cup \{u, v\}}\]
for any $\b{h}$ in~$\deformationCone(\gZono)$. 
As before, for any $\{u,v\}\in E$ and $S\subset V\ssm\{u,v\}$, we can construct such permutations $\sigma$ and $\sigma'$.
This gives the claimed inequalities describing~$\deformationCone(\gZono)$.  
\end{proof}

 
\subsection{The linear span of graphical deformation cones}
\label{subsec:linearSpan}

The description of the deformation cone of \cref{prop:redundant} is highly redundant, both in the equations describing its linear span and in the inequalities describing its facets. 
We will give a non-redundant description in \cref{thm:main}. The first step will be to give linearly independent equations describing the linear span. As a by-product, we will obtain the dimension and a linear basis of the deformation cone~$\deformationCone(\gZono)$.

For a polytope~$\polytope{P} \subset \R^d$, we define the space $\VD(\polytope{P}) \subset \virtualPolytopes[d]$ of \defn{virtual deformations} of~$\polytope{P}$ as the vector subspace of virtual polytopes generated by the deformations of~$\polytope{P}$. Equivalently, $\VD(\polytope{P})$ is the linear span of the deformation cone~$\deformationCone(\polytope{P})$.
Every virtual polytope in~$\VD(\polytope{P})$ is of the form $\polytope{P}_\b{h} - \polytope{P}_\b{h'}$ for deformations $\polytope{P}_\b{h}, \polytope{P}_\b{h'}\in \deformationCone(P)$. Note that the vector $\b{h}-\b{h'}$ uniquely describes the equivalence class of this virtual polytope, and we will use the notation $\polytope{P}_{\b{h}-\b{h'}}$ to denote it.

Denote by~$\simplex_U \eqdef \conv\set{\b{e}_u}{u \in U}$ the face of the standard simplex~$\simplex_V$ corresponding to a subset~$U \subseteq V$.
These polytopes are particularly important deformed permutahedra as they form a linear basis of the deformation space of the permutahedron \cite[Prop.~2.4]{ArdilaBenedettiDoker}. Namely, any (virtual) deformed permutahedron can be uniquely written as a signed Minkowski sum of dilates of~$\simplex_I$.
Our first result states that this linear basis is adapted to graphical zonotopes. 

\begin{theorem}
\label{thm:basisAndEquationsGraphicalTypeCones}
For any graph $G \eqdef (V,E)$:
\begin{enumerate}[(i)]
 \item The dimension of~$\VD(\gZono)$ is the number of non-empty induced cliques in~$G$ (the vertices of~$G$ count for the dimension as they correspond to the lineality space).
 \item\label{item:cliquesbasis} The faces~$\simplex_K$ of the standard simplex~$\simplex_V$ corresponding to the non-empty induced cliques~$K$ of~$G$ form a linear basis of~$\VD(\gZono)$.
 \item $\VD(\gZono)$ is the set of virtual polytopes~$\dZono$ for all~$\b{h} \in \R^{2^V}$ fulfilling the following linearly independent equations:
	\begin{itemize}
	\item $\b{h}_{\varnothing}=-\b{h}_{V}$ and
	\item $\b{h}_{S \ssm \{u\}} + \b{h}_{S \ssm \{v\}} = \b{h}_S + \b{h}_{S \ssm \{u, v\}}$ for each $S\subseteq V$ with $|S|\geq 2$ not inducing a clique of~$G$ and any
$\{u,v\}\in \binom{S}{2}\ssm E$ (here, we only choose one missing edge for each subset~$S$, for example, the lexicographically smallest).
	\end{itemize}
\end{enumerate}
\end{theorem}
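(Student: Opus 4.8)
The plan is to transport the redundant description of \cref{prop:redundant} into the coordinates provided by the simplices $\simplex_K$, and then read off the basis, the dimension, and the independent equations from a single clean computation. Since the normal fan of $\gZono$ is refined by the braid fan $\braid[V]$, every deformation of $\gZono$ is a weak Minkowski summand of $\Perm[V]$, so $\VD(\gZono)\subseteq\VD(\Perm[V])$. By \cite[Prop.~2.4]{ArdilaBenedettiDoker} the simplices $\simplex_K$, for $\varnothing\neq K\subseteq V$, form a basis of $\VD(\Perm[V])$, hence every virtual deformation writes uniquely as $\sum_{\varnothing\neq K\subseteq V} y_K\,\simplex_K$. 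Computing support functions in the directions $\ivector{S}$ gives $\b{h}_S=\sum_K y_K\big(1-2\,[K\cap S=\varnothing]\big)$, and therefore the discrete second difference occurring in \cref{prop:redundant} collapses to
\[ \b{h}_{S\ssm\{u\}}+\b{h}_{S\ssm\{v\}}-\b{h}_S-\b{h}_{S\ssm\{u,v\}}=2\sum_{K\,:\,K\cap S=\{u,v\}} y_K \]
for $u,v\in S$. As the interior of $\deformationCone(\gZono)$ is nonempty (it contains $\gZono$), the span $\VD(\gZono)$ is cut out in $\R^{2^V}$ by the equalities of \cref{prop:redundant} alone, which in $y$-coordinates read $\sum_{K\cap S=\{u,v\}}y_K=0$ for every non-edge $\{u,v\}$ and every $S\supseteq\{u,v\}$.

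From this I would first deduce parts (i) and (ii). Only non-cliques $K$ can occur in these sums, since $\{u,v\}\subseteq K$ forces the non-edge $\{u,v\}$ into $K$; in particular every $\simplex_K$ with $K$ an induced clique satisfies all equalities, so $\operatorname{span}\set{\simplex_K}{K\text{ induced clique}}\subseteq\VD(\gZono)$. Conversely, fixing a non-edge $\{u,v\}$ and writing $K=\{u,v\}\cup K'$, $S=\{u,v\}\cup S'$ with $K',S'\subseteq V\ssm\{u,v\}$, the equations become $\sum_{K'\subseteq (V\ssm\{u,v\})\ssm S'}y_{\{u,v\}\cup K'}=0$, i.e. all down-set partial sums of $(y_{\{u,v\}\cup K'})_{K'}$ vanish; Möbius inversion over the Boolean lattice then gives $y_{\{u,v\}\cup K'}=0$ for all $K'$. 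Ranging over all non-edges yields $y_K=0$ for every non-clique $K$, so $\VD(\gZono)=\operatorname{span}\set{\simplex_K}{K\text{ induced clique}}$. These simplices are linearly independent (being part of the basis above), proving part~(ii), and counting them gives the dimension in part~(i).

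For part (iii), the chosen equations number $1+\#\set{S}{|S|\ge2,\ S\text{ non-clique}}=2^{|V|}-c(G)$, where $c(G)$ is the number of nonempty induced cliques; by part~(i) this is exactly the codimension of $\VD(\gZono)$. Since the chosen equations form a subset of the equalities already known to vanish on $\VD(\gZono)$, it suffices to prove they are \emph{linearly independent}: they would then cut out a subspace of the correct dimension $c(G)$ containing $\VD(\gZono)$, hence equal to it, which simultaneously settles irredundancy and (re)proves the dimension. Passing to the hyperplane $\b{h}_\varnothing=-\b{h}_V$ (on which the $y$-coordinates live) reduces this independence to the invertibility of the square $0/1$ matrix $M=\big([K\cap S=e_S]\big)_{S,K}$ indexed by non-cliques, where $e_S$ denotes the lex-smallest non-edge of $S$ (clique columns are identically zero and are dropped).

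The technical heart, and the step I expect to be the main obstacle, is the invertibility of $M$. The plan is to order rows and columns by the lex-value of $e_S$ and of the lex-smallest non-edge $e_K$ of $K$: an entry $M_{S,K}=1$ forces $e_S\subseteq K$, hence $e_K\le_{\mathrm{lex}}e_S$, so $M$ is block-triangular for this statistic. Writing $S=e\cup S^\circ$, $K=e\cup K^\circ$, its diagonal block at a non-edge $e$ is the disjointness matrix $\big([S^\circ\cap K^\circ=\varnothing]\big)$ restricted to the family $\mathcal A_e=\set{X\subseteq V\ssm e}{e\text{ is the lex-smallest non-edge of }e\cup X}$, which is a \emph{down-set}. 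The remaining point is a self-contained lemma: for any down-set $\mathcal A$, the disjointness matrix restricted to $\mathcal A\times\mathcal A$ is invertible. I would prove it by induction on the size of the ground set via a Schur complement: singling out an element $w$, the rows and columns containing $w$ form a vanishing diagonal block (two sets containing $w$ are never disjoint), and the Schur complement expresses the determinant through the disjointness matrices of the deletion $\set{X\in\mathcal A}{w\notin X}$ and of the link $\set{X\ssm\{w\}}{w\in X\in\mathcal A}$, both down-sets on a smaller ground set, hence invertible by induction. Controlling this principal submatrix of the (indefinite) disjointness matrix through the down-set property is the crux; everything else is bookkeeping.
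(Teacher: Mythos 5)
Your proposal is correct, but it takes a genuinely different route from the paper's proof, and the difference is instructive. For parts (i) and (ii) the paper argues by a two-sided bound: the clique simplices~$\simplex_K$ lie in~$\deformationCone(\gZono)$ (using Postnikov's result that faces of the simplex are deformed permutahedra, together with the fact that summands of~$\gZono[G']$ are summands of~$\gZono$ for subgraphs~$G'$), they are linearly independent by Ardila--Benedetti--Doker, and on the other side the equations indexed by non-cliques together with $\b{h}_\varnothing=-\b{h}_V$ are linearly independent, so the two counts add up to~$2^{|V|}$ and force equality. You instead work in the Ardila--Benedetti--Doker coordinates~$y$ from the start: your support-function computation $\b{h}_{S\ssm\{u\}}+\b{h}_{S\ssm\{v\}}-\b{h}_S-\b{h}_{S\ssm\{u,v\}}=2\sum_{K\cap S=\{u,v\}}y_K$ is correct, and solving the whole system of equalities of \cref{prop:redundant} by M\"obius inversion yields (i) and (ii) in one stroke, without needing Postnikov's membership statement (your appeal to the relative interior point~$\gZono$ to identify $\VD(\gZono)$ with the solution space of the equalities is legitimate, since the normal fan of~$\gZono$ is exactly~$\gFan$, so all wall-crossing inequalities are strict there). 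The real divergence is in part (iii). The paper checks independence of the chosen equations directly in the ambient basis~$(\b{f}_X)$: ordering subsets by a linear extension of inclusion, each~$\b{o}^S$ has leading term~$\b{f}_S$, so the system is in echelon form, and the auxiliary vector with coordinates~$|X|$ separates~$\b{o}^\varnothing$ from the rest---essentially two lines. Your passage to the hyperplane $\b{h}_\varnothing=-\b{h}_V$ destroys this echelon structure, and you must instead prove invertibility of the disjointness-pattern matrix~$M$, via block-triangularization along the lex-smallest non-edge and your lemma that disjointness matrices of down-sets are invertible. I verified that this all goes through: each~$\mathcal{A}_e$ is indeed a down-set, the diagonal blocks are square, and in the Schur-complement induction the crux identity is that $B^{T}A^{-1}B$ equals the disjointness matrix of the link, which holds precisely because the down-set property puts the link inside the deletion, so the columns of~$B$ are columns of~$A$ and $A^{-1}B$ is a column-selection matrix; induction on the ground set then closes the argument (and shows the determinant is~$\pm1$). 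So your proof is complete and your down-set lemma has independent interest, but for this theorem the paper's choice of coordinates makes the independence immediate, whereas in yours it becomes the technical heart of the proof.
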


\begin{proof}
Observe first that the faces~$\simplex_K$ of the standard simplex~$\simplex_V$ corresponding to the induced cliques~$K$ of~$G$ are all in the  deformation cone~$\deformationCone(\gZono)$.
Indeed, faces of the standard simplex~$\Delta_K$ belong to the deformation cone of the complete graph~$K_K$ by \cite[Prop.~6.3]{Postnikov}.
The graphical zonotope $\gZono[G']$ is a Minkowski summand of~$\gZono$ for any subgraph~$G'$ of~$G$, and hence summands of $\gZono[G']$ are also summands of $\gZono[G]$. 

Moreover, all faces~$\simplex_I$ for~${\varnothing \ne I \subsetneq V}$ are Minkowski independent by~\cite[Prop.~2.4]{ArdilaBenedettiDoker}.
This shows that the dimension of $\VD(\gZono)$ is at least the number of non-empty induced cliques of~$G$.

Let $(\b{f}_X)_{X\subseteq V}$ be the canonical basis of~$\big(\mathbb{R}^{2^V}\big)^*$.
The vectors 
\[
\b{o}^S\eqdef \b{f}_S -\b{f}_{S \ssm \{u\}} - \b{f}_{S \ssm \{v\}} + \b{f}_{S \ssm \{u, v\}},
\]
for all subsets $\varnothing \neq S \subseteq V$ not inducing a clique of~$G$ and one selected missing edge $\{u, v\}$ for each~$S$, are clearly linearly independent. 
Indeed, if the $\b{f}_X$ are ordered according to any linear extension of the inclusion order on the indices~$X$, and the $\b{o}^S$ are ordered analogously in terms of the indices~$S$, then the equations are already in echelon form, as $\b{f}_S$ is the greatest non-zero coordinate of~$\b{o}^S$.
Finally, the vector $\b{v}\in 2^V$ with $\b{v}_X=|X|$ for $X\in 2^V$ is orthogonal to any $\b{o}^S$ with $|S| \geq 2$ but not to $\b{o}^\varnothing\eqdef \b{f}_{\varnothing}+\b{f}_{V}$, showing that the latter is linearly independent to the former. This proves that the dimension of $\VD(\gZono)$ is at most the number of non-empty induced cliques of~$G$.

We conclude that $\bigset{\simplex_K}{\varnothing\neq K\subseteq V\text{ inducing a clique of }G}$ is a linear basis of the deformation cone, and that $\bigset{\b{o}^S}{S = \varnothing \text{ or } S \subseteq V \text{ not inducing a clique of } G}$ is a basis of its orthogonal complement (we slightly abuse notation here as  $\b{o}^S$ was defined in~$\smash{\big(\mathbb{R}^{2^V}\big)^*}$ instead of in~$(\virtualPolytopes)^*$, but note that each $\b{f}_X$ can be considered as a linear functional in~$(\virtualPolytopes)^*$ if seen as a support function).
\end{proof}

Note that the dimension of the deformation space of graphical zonotopes has been independently computed by Raman Sanyal and Josephine Yu (personnal communication), who computed the space of Minkowski $1$-weights of graphical zonotopes in the sense of McMullen~\cite{McMullen1996}. Their proof also uses the basis from \cref{thm:basisAndEquationsGraphicalTypeCones}~\ref{item:cliquesbasis}, but with an alternative argument to show that they are a generating family.


\subsection{The facets of graphical deformation cones}
\label{subsec:facets}

To conclude, it remains to compute the facets of the deformation cones, \ie a non-redundant inequality description.

We define the \defn{neighborhood} of a vertex~$v$ of a graph $G \eqdef (V,E)$ as $N(v)\eqdef \set{u\in V}{\{u,v\}\in E}$.

\begin{theorem}\label{thm:main}
For any graph $G \eqdef (V,E)$, the deformation cone~$\deformationCone(\gZono)$ of the graphical zonotope~$\gZono$ is the set of polytopes~$\dZono$ for all~$\b{h}$ in the cone of~$\R^{2^V}$ defined by the following irredundant facet description:
\begin{itemize}
\item $\b{h}_{\varnothing}=-\b{h}_{V}$,
\item $\b{h}_{S \ssm \{u\}} + \b{h}_{S \ssm \{v\}} = \b{h}_S + \b{h}_{S \ssm \{u, v\}}$ for each~$\varnothing \neq S \subseteq V$ and any~$\{u,v\} \in \binom{S}{2} \ssm E$,
\item $\b{h}_{S \cup \{u\}} + \b{h}_{S \cup \{v\}} \geq  \b{h}_{S} + \b{h}_{S \cup \{u, v\}}$ for each~$\{u,v\}\in E$ and~$S \subseteq N(v) \cap N(v)$.
\end{itemize}
\end{theorem}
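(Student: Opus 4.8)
The plan is to take the possibly-redundant description of \cref{prop:redundant} as the starting point and to prune it, feeding in the linear-algebraic information of \cref{thm:basisAndEquationsGraphicalTypeCones}. For a height vector~$\b{h}$ write $\nabla_{u,v}\b{h}(S) \eqdef \b{h}_{S\cup\{u\}}+\b{h}_{S\cup\{v\}}-\b{h}_{S}-\b{h}_{S\cup\{u,v\}}$ for the discrete mixed second difference, so that the relations of \cref{prop:redundant} read $\nabla_{u,v}\b{h}(S)=0$ for non-edges and $\nabla_{u,v}\b{h}(S)\geq 0$ for edges. The equalities of \cref{thm:main} are literally the non-edge equalities of \cref{prop:redundant} (reindexed by $S \mapsto S\cup\{u,v\}$); since this family contains the linearly independent one of \cref{thm:basisAndEquationsGraphicalTypeCones} and every member vanishes on $\VD(\gZono)$, it cuts out exactly the linear span $\VD(\gZono)$. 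It then remains to prove, modulo these equalities, that the edge inequalities with $S\subseteq N(u)\cap N(v)$ both suffice (the others are redundant) and are irredundant.

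For the redundancy step, take $\{u,v\}\in E$ and $S\not\subseteq N(u)\cap N(v)$, and pick $w\in S$ outside, say, $N(u)$, so $\{u,w\}\notin E$. The equalities attached to the non-edge $\{u,w\}$ say that $\nabla_{u,w}\b{h}$ vanishes identically; differencing this identity in the direction $v$ yields $\nabla_{u,v}\b{h}(S)=\nabla_{u,v}\b{h}(S\ssm\{w\})$ for every $\b{h}$ satisfying the equalities. Thus the inequality indexed by $(u,v,S)$ agrees, on $\VD(\gZono)$, with the one indexed by $(u,v,S\ssm\{w\})$. Iterating removes from $S$ all vertices outside $N(u)\cap N(v)$, so each edge inequality reduces to the one indexed by $(u,v,\,S\cap N(u)\cap N(v))$, proving that the retained inequalities imply all those of \cref{prop:redundant}.

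For the irredundancy step I would pass to the clique coordinates of \cref{thm:basisAndEquationsGraphicalTypeCones}, writing a virtual deformation as $\sum_{K} c_K\,\simplex_K$ over the non-empty induced cliques $K$. Evaluating $\nabla_{u,v}(\,\cdot\,)(S)$ on the support vector of $\simplex_K$ (whose $S$-coordinate is $\max_{u\in K}(\ivector{S})_u$) gives the value $2$ when $\{u,v\}\subseteq K$ and $K\cap S=\varnothing$, and $0$ otherwise; hence in these coordinates the inequality indexed by $(u,v,S)$ reads $\sum_{\{u,v\}\subseteq K,\;K\cap S=\varnothing} c_K\geq 0$. To show that one such inequality is facet-defining, I would exhibit a point of $\deformationCone(\gZono)$ where it is tight while all other retained inequalities are strict. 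The key structural point is that the cliques containing $\{u,v\}$ are the only coordinates entering the inequalities of the edge $\{u,v\}$, and each other edge $\{a,b\}$ owns a \emph{private} coordinate $c_{\{a,b\}}$ appearing (with coefficient $1$) in all of its own inequalities and in none of those of $\{u,v\}$. This splits the problem into a single-edge subproblem plus a boosting argument.

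Concretely, choose reals $a_x$, for $x\in N(u)\cap N(v)$, with $a_x<0$ when $x\in (N(u)\cap N(v))\ssm S$ and $a_x>0$ when $x\in S$, and set $c_{\{u,v\}}\eqdef -\sum_{x\in (N(u)\cap N(v))\ssm S} a_x$, $c_{\{u,v,x\}}\eqdef a_x$, and $c_{\{u,v\}\cup J}\eqdef 0$ for the remaining cliques; the $(u,v,T)$-inequality then takes the modular value $\sum_{x\in S\ssm T}a_x + \sum_{x\in T\ssm S}(-a_x)$, which is $0$ at $T=S$ and strictly positive for every other $T\subseteq N(u)\cap N(v)$. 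Finally set $c_{\{a,b\}}\eqdef M$ for every edge $\{a,b\}\neq\{u,v\}$ and $c_K\eqdef 0$ for all remaining cliques; since each inequality of such an edge contains the term $c_{\{a,b\}}=M$ together with only finitely many bounded terms, taking $M$ large makes all of them strict. The resulting height vector lies in $\deformationCone(\gZono)$, makes the $(u,v,S)$-inequality tight and all others strict, and hence lies in the relative interior of the corresponding facet. I expect the main obstacle to be exactly the coupling between edges induced by the cliques of size at least three (a single coordinate $c_K$ enters the inequalities of all edges inside $K$); the construction is designed to neutralize this coupling through the privacy of the edge-clique coordinates, and the delicate verification is that one retains enough freedom to zero out the chosen inequality while keeping every other one strictly positive.
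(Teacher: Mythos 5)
Your proposal is correct, and it has the same two-stage architecture as the paper's proof: first show that the inequalities with $S \subseteq N(u)\cap N(v)$ imply all the inequalities of \cref{prop:redundant}, then certify each retained inequality by a point of the linear span that is tight for it and strict for all the others. Your first stage is essentially identical to the paper's: the paper also removes one vertex $x \in S$ with $\{x,v\}\notin E$ at a time, expressing the inequality indexed by $(u,v,S)$ as the inequality indexed by $(u,v,S\ssm\{x\})$ plus two non-edge equalities; your ``differencing $\nabla_{u,w}\b{h}\equiv 0$ in the direction $v$'' is exactly that linear combination. The second stage is where you genuinely diverge. The paper constructs the witness directly as a height vector $\b{w} = \b{t}^S - \b{t}^T + \b{c} \in \R^{2^V}$ built from ad hoc combinatorial vectors (indicators $\b{t}^{xyz}$ of sets containing a fixed triple, and cut vectors $\b{c}^{xy}$), and then verifies a table of inner products against all normals $\b{n}(a,b,X)$, including the non-edge normals to check membership in the span. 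You instead work in the clique coordinates of \cref{thm:basisAndEquationsGraphicalTypeCones}~\ref{item:cliquesbasis}, which makes membership in the span automatic, and everything rests on the pairing formula: the functional of $(u,v,S)$ evaluated at $\simplex_K$ equals $2$ if $\{u,v\}\subseteq K$ and $K\cap S=\varnothing$, and $0$ otherwise. I checked this formula and your subsequent computations; they are correct, and your witness mirrors the paper's decomposition---an edge-local part ($c_{\{u,v\}}$ and $c_{\{u,v,x\}}=a_x$, playing the role of the paper's $\b{t}^S - \b{t}^T + \tfrac{|S|}{2}\b{c}^{uv}$) plus a boost $c_{\{a,b\}}=M$ on the private coordinate of every other edge (the paper's $\sum_{\{a,b\}} \b{c}^{ab}$). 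What your route buys: the certificate is an explicit signed Minkowski combination of faces of the simplex, the coupling/privacy structure among edges is transparent, and no ad hoc vectors or verification table are needed. What it costs: you must first establish the pairing formula and you lean on the clique basis, whereas the paper's verification is self-contained in $\R^{2^V}$.

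One small point to make explicit in a final write-up (the paper is equally terse here): your witness satisfies the chosen inequality with \emph{equality} rather than strictly violating it. To conclude irredundancy, note that the functional of $(u,v,S)$ is not identically zero on $\VD(\gZono)$ (it has coefficient $2$ on $c_{\{u,v\}}$, since $\{u,v\}\cap S = \varnothing$), so a small perturbation of the witness within the span strictly violates the chosen inequality while keeping all other retained inequalities strict; this shows that dropping it really enlarges the cone, and that distinct triples $(u,v,S)$ define distinct facets.
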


Note that this description is given as a face of the submodular cone, embedded into $\R^{2^V}$. One gets easily an intrinsic presentation by restricting to the space spanned by the biconnected subsets of~$V$. However, that presentation loses its symmetry, and the explicit equations depend on the biconnected sets of~$G$.

\begin{proof}[Proof of \cref{thm:main}]
We know by \cref{prop:redundant} that~$\deformationCone(\gZono)$ is the intersection of the cone  
\begin{equation}\label{eq:inequality}
\b{h}_{S \cup \{u\}} + \b{h}_{S \cup \{v\}} \geq  \b{h}_{S} + \b{h}_{S \cup \{u, v\}}
\end{equation}
for $\{u,v\}\in E \text{ and } S\subseteq  V\ssm\{u,v\}$ with the linear space given by the equations $\b{h}_\varnothing=-\b{h}_V$ and
\begin{equation}\label{eq:equality}
\b{h}_{S \cup \{u\}} + \b{h}_{S \cup \{v\}} = \b{h}_{S} + \b{h}_{S \cup \{u, v\}}
\end{equation}
for $\{u,v\}\in \binom{V}{2}\ssm E \text{ and } S\subseteq  V\ssm\{u,v\}$.

We have already determined the equations describing the linear span in \cref{thm:basisAndEquationsGraphicalTypeCones}, so it only remains to provide non-redundant inequalities describing the deformation cone.
 
We will prove first that the inequalities from \eqref{eq:inequality} indexed by $\{u,v\}\in E \text{ and } S\subseteq  N(v)\cap N(v)$ suffice to describe~$\deformationCone(\gZono)$. 
To this end, consider an inequality from \eqref{eq:inequality} for which $S\nsubseteq  N(v)\cap N(v)$. Without loss of generality, assume that there is some $x\in S$ such that $\{x,v\}\notin E$. We will show that this inequality is induced (in the sense that the halfspaces they define coincide on the linear span of~$\deformationCone(\gZono)$) by the inequality 
\begin{equation}\label{eq:reducedinequality}
 \b{h}_{S' \cup \{u\}} + \b{h}_{S' \cup \{v\}} \geq \b{h}_{S'} + \b{h}_{S' \cup \{u, v\}}
\end{equation}
where $S'=S\ssm \{x\}$. Our claim will then follow from this by induction on the elements of $S\ssm (N(v)\cap N(v))$.

Indeed, if $\{x,v\}\notin E$, we know by \eqref{eq:equality} that the following two equations hold in the linear span of~$\deformationCone(\gZono)$ by considering the non-edge $\{x,v\}$ with the subsets $S'$ and $S'\cup\{u\}$, respectively: 
\begin{align}
\b{h}_{S\cup \{u\}} + \b{h}_{S'\cup \{u,v\}} &= \b{h}_{S'\cup \{u\}} + \b{h}_{S\cup \{u, v\}}\label{eq:auxeq1},\\
\b{h}_{S} + \b{h}_{S' \cup \{v\}} &= \b{h}_{S'} + \b{h}_{S \cup \{ v\}}\label{eq:auxeq2},
\end{align}
where we used that $(S'\cup \{u\}) \cup \{x\}=S\cup \{u\}$ and $(S'\cup \{u\}) \cup \{x, v\}=S\cup \{u, v\}$ in the first equation, and that $S' \cup \{x\}=S$ and $S' \cup \{x, v\}=S \cup \{ v\}$ in the second equation.
To conclude, note that \eqref{eq:inequality} is precisely the linear combination $\eqref{eq:reducedinequality} + \eqref{eq:auxeq1} - \eqref{eq:auxeq2}$.

We know therefore that the descriptions in \cref{prop:redundant} and \cref{thm:main} give rise to the same cone. It remains to show that the latter is irredundant. That is, that each of the inequalities gives rise to a unique facet of~$\deformationCone(\gZono)$.

Let $(\b{f}_X)_{X\subseteq V}$ be the canonical basis of $\big(\mathbb{R}^{2^V}\big)^*$.
For $u,v\in V$ and $S\subseteq V\ssm\{u,v\}$, let 
\[
\b{n}(u,v,S) \eqdef \b{f}_{S \cup \{u\}} + \b{f}_{S \cup \{v\}} - \b{f}_{S} - \b{f}_{S \cup \{u, v\}}.
\]
Note that, if $\{u,v\}\notin E$, then $\b{n}(u,v,S)$ is orthogonal to~$\deformationCone(\gZono)$, whereas if $\{u,v\}\in E$, then $\b{n}(u,v,S)$ is an inner normal vector to~$\deformationCone(\gZono)$.

Fix $\{u,v\}\in E$ and $S\subseteq  N(v)\cap N(v)$. To prove that the halfspace with normal~$\b{n}(u,v,S)$ is not redundant, we will exhibit a vector $\b{w}\in \mathbb{R}^{2^V}$ in the linear span of~$\deformationCone(\gZono)$ that belongs to the interior of all the halfspaces describing~$\deformationCone(\gZono)$ except for this one. 
That is, we will construct a vector $\b{w}\in \mathbb{R}^{2^V}$ respecting the system:
\begin{equation}\label{eq:system}
\begin{cases}
     \dotprod{ \b{w} }{\b{n}(u,v,S)} \leq  0,\\
     \dotprod{ \b{w} }{\b{n}(u,v,X)} >  0  &\text{ for }  S\ne X \subseteq N(u)\cap N(v),\\
     \dotprod{ \b{w} }{\b{n}(a,b,X)} >  0  &\text{ for }  \{a,b\}\in E\ssm \{u,v\}\text{ and } X\subseteq N(a)\cap N(b),\text{ and } \\
     \dotprod{ \b{w} }{\b{n}(a,b,X)} =  0  &\text{ for } \{a,b\}\in \binom{V}{2}\ssm E \text{ and } X\subseteq V\ssm\{a,b\}.
\end{cases}
\end{equation}

Denote by $T \eqdef N(u)\cap N(v)\ssm S$. We will construct $\b{w}$ as the sum $\b{w}\eqdef \b{t}^S - \b{t}^T+ \b{c}$ for some vectors $\b{t}^S$, $\b{t}^T$, and $\b{c}\in \mathbb{R}^{2^V}$ defined below whose scalar products with $\b{n}(a,b,X)$ for $\{a,b\}\in \binom{V}{2}$ and $X\subseteq V\ssm \{a,b\}$ fulfill:
\[
\renewcommand{\arraystretch}{1.3}
\begin{array}{l|ccc}
& \dotprod{\b{t}^S}{\b{n}(a,b,X)}& \dotprod{-\b{t}^T}{\b{n}(a,b,X)} & \dotprod{\b{c}}{\b{n}(a,b,X)} \\
\hline
\text{if } \{a,b\}=\{u,v\}\text{ and }X=S& -|S| & 0  & |S| \\
\text{if } \{a,b\}=\{u,v\}\text{ and }S\ne X \subseteq N(u)\cap N(v)& -|S\cap X| & |T\cap X| & |S|\\
\text{if } \{a,b\}\in E\ssm\{u,v\}\text{ and }X\subseteq N(a)\cap N(b)& \geq -1 & \geq 0 & 2\\
\text{if } \{a,b\}\notin E& 0 & 0  & 0 
\end{array}
\renewcommand{\arraystretch}{1}
\]
It immediately follows from this table that the vector $\b{w}$ will fulfill the desired properties from~\eqref{eq:system}. For the second one, note that if $S\ne X \subseteq S\sqcup T$, then either $|S\cap X|<|S|$ or $|T\cap X|>0$.

To define these vectors,  first, for $\{x,y,z\}\in \binom{V}{3}$, let $\b{t}^{xyz}\in \mathbb{R}^{2^V}$ be the vector such that $\b{t}^{xyz}_X = 1$ if $\{x,y,z\} \subseteq X$ and $\b{t}^{xyz}_X = 0$ otherwise. Note that, for any $a,b\in \binom{V}{2}$ and $X\subseteq V\ssm\{a,b\}$, we have
\begin{equation}\label{eq:txyz}
\dotprod{\b{t}^{xyz}}{\b{n}(a,b,X)} = 
\begin{cases}
-1& \text{ if }\{x,y,z\}=\{a,b,t\}\text{ for some }t\in X, \text{ and }\\
0 & \text{ otherwise }.
\end{cases}
\end{equation}
We define 
\[
\b{t}^S \eqdef \sum_{s\in S} \b{t}^{uvs}
\qquad\text{ and }\qquad \b{t}^T \eqdef \sum_{t\in T} \b{t}^{uvt}.
\]
It is straightforward to derive the identities in the table from \eqref{eq:txyz}. For the inequalities, notice that 
if $\dotprod{\b{t}^{uvx}}{\b{n}(a,b,X)}=-1$ but $\{a,b\}\neq \{u,v\}$, then either $\{a,b\}=\{u,x\}$ or $\{a,b\}=\{v,x\}$, and in both cases  
 $\dotprod{\b{t}^{uvy}}{\b{n}(a,b,X)}=0$ for any $y\neq x$.

Now, for $\{x,y\}\in \binom{V}{2}$, let $\b{c}^{xy}\in \mathbb{R}^{2^V}$ be the vector such that $\b{c}^{xy}_X = 1$ if $| \{x,y\} \cap X| =1$ (that is, if $\{x,y\}$ belongs to the cut defined by~$X$), and $\b{c}^{xy}_X = 0$ otherwise. Note that, for any $a,b\in \binom{V}{2}$ and $X\subseteq V\ssm\{a,b\}$, we have 
\begin{equation}\label{eq:cxy}
\dotprod{\b{c}^{xy}}{\b{n}(a,b,X)} =
\begin{cases}
2&\text{ if }\{a,b\}= \{x,y\},\text{ and }\\
0&\text{ otherwise.}
\end{cases}
\end{equation}
We set 
\[
\b{c}\eqdef \frac{|S|}{2} \b{c}^{uv} + \sum_{ \{a,b\}\in E\ssm \{u,v\}} \b{c}^{ab}.
\]
The identities in the table are straightforward to derive from~\eqref{eq:cxy}.
\end{proof}

\begin{corollary}
\label{coro:numerologyGraphicalTypeCone}
For any graph~$G \eqdef (V,E)$, the dimension of~$\deformationCone(\gZono)$ is the number of induced cliques in~$G$, the dimension of the lineality space of~$\deformationCone(\gZono)$ is~$|V|$, and the number of facets of~$\deformationCone(\gZono)$ is the number of triplets~$(u,v,S)$ with~$\{u,v\} \in E$ and~$S \subseteq N(u) \cap N(v)$.
\end{corollary}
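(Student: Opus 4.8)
The statement follows by reading off the numerology from \cref{thm:basisAndEquationsGraphicalTypeCones} and \cref{thm:main}, and the plan is to treat the three quantities in turn.

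For the dimension, recall that $\deformationCone(\gZono)$ is full dimensional inside its linear span $\VD(\gZono)$, since its relative interior (the type cone) is nonempty, containing $\gZono$ itself. Hence $\dim \deformationCone(\gZono) = \dim \VD(\gZono)$, and the latter equals the number of non-empty induced cliques of~$G$ by \cref{thm:basisAndEquationsGraphicalTypeCones}~(i), which is exactly the asserted count of induced cliques.

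For the lineality, I would identify the largest linear subspace contained in $\deformationCone(\gZono)$ as the set of height vectors~$\b{h}$ on which every facet inequality of \cref{thm:main} is tight. By the reduction carried out in the proof of \cref{thm:main}, tightness of the inequalities indexed by $\{u,v\}\in E$ and $S\subseteq N(u)\cap N(v)$ propagates to all subsets, forcing $\b{h}_{S\cup\{u\}}+\b{h}_{S\cup\{v\}}=\b{h}_S+\b{h}_{S\cup\{u,v\}}$ for every $S\subseteq V\ssm\{u,v\}$; combined with the equalities attached to the non-edges, this yields the same relation for \emph{every} pair $\{u,v\}$ and every~$S$. A height vector with all these second differences vanishing is modular, i.e.\ of the form $\b{h}_S=c_0+\sum_{v\in S}c_v$, so these conditions cut out a subspace of dimension $|V|+1$; imposing the remaining equality $\b{h}_{\varnothing}=-\b{h}_V$ removes one further degree of freedom, leaving dimension~$|V|$. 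This subspace coincides with the space of translations $\b{x}\mapsto\dotprod{\b{t}}{\b{x}}$ (which send $S$ to $\b{h}_S=\dotprod{\b{t}}{\ivector{S}}$, a modular and injective assignment of heights), confirming that the lineality has dimension~$|V|$.

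For the facets, \cref{thm:main} provides an irredundant inequality description whose facet-defining inequalities are indexed precisely by the triplets $(u,v,S)$ with $\{u,v\}\in E$ and $S\subseteq N(u)\cap N(v)$, each giving a distinct facet. Hence the number of facets of $\deformationCone(\gZono)$ is the number of such triplets. The only step requiring genuine verification is the lineality computation; the dimension and the facet count are immediate consequences of the two preceding theorems. The mild obstacle in the lineality step is precisely to check that tightness of the listed facets propagates to \emph{all} subsets~$S$, and not only those contained in $N(u)\cap N(v)$ --- but this is exactly the inductive reduction already established in the proof of \cref{thm:main}.
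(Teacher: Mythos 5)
Your proof is correct, and for the dimension and the facet count it is exactly the intended reading: the paper states this corollary without a separate proof, since full-dimensionality of $\deformationCone(\gZono)$ inside its linear span $\VD(\gZono)$ (the type cone is nonempty, as it contains $\gZono$ itself) gives the dimension via \cref{thm:basisAndEquationsGraphicalTypeCones}, and the irredundancy assertion of \cref{thm:main} gives the facet count. Where you genuinely depart from the paper is the lineality. You work in the $\b{h}$-coordinates: the lineality is the locus in the linear span where every facet inequality is tight; you propagate tightness from $S\subseteq N(u)\cap N(v)$ to arbitrary $S\subseteq V\ssm\{u,v\}$ by reusing the linear-combination reduction from the proof of \cref{thm:main}; vanishing of all second differences then forces $\b{h}$ to be modular, and the constraint $\b{h}_\varnothing=-\b{h}_V$ cuts the $(|V|+1)$-dimensional space of modular height vectors down to dimension $|V|$, realized exactly by translations. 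This is valid, and it has the virtue of staying entirely inside the inequality description and exhibiting the lineality concretely. The paper's implicit argument is shorter and geometric: translations always give a $|V|$-dimensional lineality (as recorded in \cref{subsec:deformationCones}), and nothing else can lie in the lineality, because an element of the lineality is a polytope $\polytope{Q}\in\deformationCone(\gZono)$ whose group inverse in the space of virtual polytopes is again a polytope $\polytope{R}\in\deformationCone(\gZono)$, whence $\polytope{Q}+\polytope{R}=\{\b{0}\}$ and both summands must be single points. Your route costs the propagation step (which, as you correctly note, is already established in the proof of \cref{thm:main}); the geometric route is immediate but does not describe the lineality in the $\b{h}$-coordinates.
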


\begin{example}
For the complete graph~$K_V$, the graphical zonotope $\gZono[K_V]$ is a permutahedron and the deformation cone $\deformationCone(\gZono[K_V])$ is the submodular cone given by the irredundant inequalities~${\b{h}_{S \cup \{u\}} + \b{h}_{S \cup \{v\}} \geq  \b{h}_{S} + \b{h}_{S \cup \{u, v\}}}$ for each~$\{u,v\} \subseteq V$ and~$S \subseteq V \ssm \{u,v\}$. 
(The usual presentation imposes $\b{h}_\varnothing = 0$, but both presentations are clearly equivalent up to translation).
It has dimension~$2^{|V|}-1$ and $\binom{|V|}{2} 2^{|V|-2}$ facets.
The lineality is $|V|$-dimensional, given by the space of translations in~$\R^{|V|}$.

For instance, for the triangle~$K_3$, the graphical zonotope~$\gZono[K_3]$ is the regular hexagon depicted in the bottom left of \cref{fig:3CycleDeformationCone}, which arises as the Minkowski sum of $3$ coplanar vectors in $\R^{3}$.
Its deformation cone $\deformationCone(\gZono[K_3])$ lives in the \mbox{$8$-dimensional} space~$\smash{\R^{2^{[3]}}}$, has dimension~$7$, a lineality space of dimension~$3$, and $6$ facets.
It admits as irredundant description the equation $\b{h}_{\varnothing} = -\b{h}_{123}$ and the following $6$ inequalities:
\begin{align*}
    \b{h}_1 + \b{h}_2 & \ge \b{h}_{\varnothing} + \b{h}_{12} &
    \b{h}_1 + \b{h}_3 & \ge \b{h}_{\varnothing} + \b{h}_{13} &
    \b{h}_2 + \b{h}_3 & \ge \b{h}_{\varnothing} + \b{h}_{23}
    \\
    \b{h}_{12} + \b{h}_{13} & \ge \b{h}_1 + \b{h}_{123} &
    \b{h}_{12} + \b{h}_{23} & \ge \b{h}_2 + \b{h}_{123} &
    \b{h}_{13} + \b{h}_{23} & \ge \b{h}_3 + \b{h}_{123}.
\end{align*}
After quotienting the lineality and intersecting with an affine hyperplane, we get the bipyramid illustrated on \cref{fig:3CycleDeformationCone}.
Note that the four rays of $\deformationCone(\gZono[K_3])$ (\ie vertices of the bipyramid) of the form $\simplex_K$ for an induced clique~$K$ of $K_3$ provide a linear basis of $\deformationCone(\gZono[K_3])$ (\ie an affine basis of the bipyramid).
Nevertheless, the last ray can not be written as a positive Minkowski sum of~$\simplex_K$.~
\begin{figure}
	\centerline{\begin{tikzpicture}[x={(1cm,0cm)}, y={(0cm,1cm)}, z={(3.85mm,3.85mm)}, scale=1.5]
	\newcommand{\translatepoint}[1]{\coordinate (trans) at (#1);}
	\newcommand{\cornerGraph}{
		\begin{tikzpicture}[inner sep=0]
			\node[label={[xshift=-.15cm, yshift=-.25cm] \tiny 1}] (1) at (-0.2,-0.35) {\tiny{$\bullet$}};
			\node[label={[xshift=.15cm, yshift=-.25cm] \tiny 2}] (2) at (0.2,-0.35) {\tiny{$\bullet$}};
			\node[label={[xshift=0cm, yshift=.05cm] \tiny 3}] (3) at (0,0) {\tiny{$\bullet$}};
			\draw (2) -- (1) -- (3);
		\end{tikzpicture}
	}
	\newcommand{\triangleGraph}{
		\begin{tikzpicture}[inner sep=0]
			\node[label={[xshift=-.15cm, yshift=-.25cm] \tiny 1}] (1) at (-0.2,-0.35) {\tiny{$\bullet$}};
			\node[label={[xshift=.15cm, yshift=-.25cm] \tiny 2}] (2) at (0.2,-0.35) {\tiny{$\bullet$}};
			\node[label={[xshift=0cm, yshift=.05cm] \tiny 3}] (3) at (0,0) {\tiny{$\bullet$}};
			\draw (1) -- (2) -- (3) -- (1);
		\end{tikzpicture}
	}
	\newdimen\Rad
	\Rad=1cm
	\coordinate (a) at (90:\Rad);
	\coordinate (b) at (150:\Rad);
	\coordinate (c) at (210:\Rad);
	\coordinate (d) at (270:\Rad);
	\coordinate (e) at (330:\Rad);
	\coordinate (f) at (390:\Rad);
	\coordinate (x) at (0,0,0);
	\coordinate (y) at (0,4,0);
	\coordinate (z) at (0,2,2);
	\coordinate (s) at (-2,2,.67);
	\coordinate (t) at (2,2,.67);
	\draw[dotted] (x)--(y);
	\draw (z)--(s);
	\draw (z)--(t);
	\draw (z)--(x);
	\draw (z)--(y);
	\draw (y)--(s);
	\draw (y)--(t);
	\draw (x)--(s);
	\draw (x)--(t);
	\draw (x) node {$\bullet$};
	\draw (y) node {$\bullet$};
	\draw (s) node {$\bullet$};
	\draw (t) node {$\bullet$};
	\draw (z) node {$\bullet$};
	\draw ($0.5*(y)+0.5*(z)$) node {$\bullet$};
	\draw ($0.33*(y)+0.33*(z)+0.33*(s)$) node {$\bullet$};
	\draw[gray] ($0.33*(x)+0.33*(y)+0.33*(z)$) node {$\bullet$};
	%
	\translatepoint{0,-1,0}
	\draw ($-0.5*(a) + (trans)$) -- ($0.5*(a) + (trans)$);
	\draw ($(trans) + (0.25,-0.25,0)$) node {$\Delta_{12}$};
	\draw[gray,->] ($(trans)+(0.1,0,0)$) to[bend right] ($(x)+(0.1,-0.1,0)$);
	%
	\translatepoint{0,5,0}
	\draw ($-0.5*(b) + (trans)$) -- ($0.5*(b) + (trans)$);
	\draw ($(trans) + (0.25,-0.3,0)$) node {$\Delta_{23}$};
	\draw[gray,->] ($(trans)+(-0.1,-0.1,0)$) to[bend right] ($(y)+(-0.1,0.1,0)$);
	%
	\translatepoint{2,0,0}
	\draw ($-0.5*(c) + (trans)$) -- ($0.5*(c) + (trans)$);
	\draw ($(trans) + (0.25,0.4,0)$) node {$\Delta_{13}$};
	\draw[gray,->] ($(trans)+(-0.1,0.1,0)$) to[bend right] ($(z)+(0.05,-0.1,0)$);
	%
	\translatepoint{3,2,0}
	\draw ($(0,0,0) + (trans)$) -- ($-1*(c) + (trans)$);
	\draw ($(0,0,0) + (trans)$) -- ($-1*(b) + (trans)$);
	\draw ($-1*(c) + (trans)$) -- ($-1*(b) + (trans)$);
	\draw[gray,->] ($(trans)+(0.25,0.25,0)$) to[bend right] ($(t)+(0.1,0.1,0)$);
	%
	\translatepoint{-2.5,2,0}
	\draw ($(0,0,0) + (trans)$) -- ($(c) + (trans)$);
	\draw ($(0,0,0) + (trans)$) -- ($(b) + (trans)$);
	\draw ($(c) + (trans)$) -- ($(b) + (trans)$);
	\draw ($(trans) + (-1.2,0,0)$) node {$\Delta_{123}$};
	\draw[gray,->] ($(trans)+(-0.25,0.25,0)$) to[bend left] ($(s)+(-0.1,0.1,0)$);
	%
	\translatepoint{2,3.5,0}
	\draw ($(0,0,0) + (trans)$) -- ($-1*(c) + (trans)$);
	\draw ($(0,0,0) + (trans)$) -- ($-1*(b) + (trans)$);
	\draw ($-1*(b) + (trans)$) -- ($-1*(b) - (c) + (trans)$);
	\draw ($-1*(c) + (trans)$) -- ($-1*(b) - (c) + (trans)$);
	\draw ($(trans) + (0.7,0.9,0)$) node {$\mathsf{Z}$};
	\draw ($(trans)+(.9,0.8,0)$) node {\cornerGraph};
	\draw[gray,->] ($(trans)+(0.25,-0.25,0)$) to[bend left] ($0.5*(y)+0.5*(z)+(0.1,-0.05,0)$);
	%
	\translatepoint{-2.5,4,0}
	\draw ($-1*(b) + (c) +(trans)$) -- ($-1*(b) - (c) + (trans)$) -- ($(b) - (c) + (trans)$) -- ($(b) + (trans)$) -- ($(c) + (trans)$) -- cycle;
	\draw ($(trans) + (0,1.2,0)$) node {$\Delta_{13}+\Delta_{23}+\Delta_{123}$};
	\draw[gray,->] ($(trans)+(0.7,-0.7,0)$) to[bend right] ($0.33*(y)+0.33*(z)+0.33*(s)+(-0.1,-0.05,0)$);
	%
	\translatepoint{-2.5,0,0}
	\draw ($(a) + (trans)$) -- ($(b) + (trans)$) -- ($(c) + (trans)$) -- ($(d) + (trans)$) -- ($(e) + (trans)$) -- ($(f) + (trans)$) -- cycle;
	\draw ($(trans) + (-1.4,0.1,0)$) node {$\mathsf{Z}$};
	\draw ($(trans) + (-1.2,0,0)$) node {\triangleGraph};
	\draw[gray,->] ($(trans)+(1,0,0)$) to[bend right] ($0.33*(x)+0.33*(y)+0.33*(z)+(0,-0.1,0)$);
\end{tikzpicture}}
	\caption{A $3$-dimensional affine section of the deformation cone $\deformationCone(\gZono[K_3])$ for the triangle~$K_3$. The deformations of $\gZono[K_3]$ corresponding to some of the points of $\deformationCone(\gZono[K_3])$ are depicted.}
	\label{fig:3CycleDeformationCone}
\end{figure}
\end{example}

\begin{example}
For a triangle-free graph~$G \eqdef (V,E)$, the deformation cone~$\deformationCone(\gZono)$ has dimension~$|V|+|E|$ and~$|E|$ facets.
As before, the lineality is $|V|$-dimensional, given by the space of translations in~$\R^{|V|}$.
Thus~$\deformationCone(\gZono)$ is simplicial.

For instance, for the $4$-cycle~$C_4$, the graphical zonotope~$\gZono[C_4]$ is the $3$-dimensional zonotope depicted in the bottom right of \cref{fig:4CycleDeformationCone}, which arises as the Minkowski sum of $4$ vectors in a hyperplane of~$\R^4$.
Its deformation cone $\deformationCone(\gZono[C_4])$ lives in the $16$-dimensional space~$\smash{\R^{2^{[4]}}}$, has dimension~$8$, a lineality space of dimension~$4$, and $4$ facets.
It admits as irredundant description the following $8$ equations and $4$ inequalities:
\begin{align*}
    \b{h}_\varnothing & = -\b{h}_{1234} &
    \b{h}_{12} + \b h_{14} & = \b{h}_{124} + \b{h}_{1} &
    \b{h}_{1} + \b h_{2} & \geq \b{h}_{12} + \b{h}_\varnothing
    \\
    \b{h}_1 + \b h_3 & = \b{h}_{13} + \b{h}_\varnothing &
    \b{h}_{12} + \b h_{23} & = \b{h}_{123} + \b{h}_{2} &
    \b{h}_{2} + \b h_{3} & \geq \b{h}_{23} + \b{h}_\varnothing
    \\
    \b{h}_2 + \b h_4 & = \b{h}_{24} + \b{h}_\varnothing &
    \b{h}_{23} + \b h_{34} & = \b{h}_{234} + \b{h}_{3} &
    \b{h}_{3} + \b h_{4} & \geq \b{h}_{34} + \b{h}_\varnothing
    \\
     \b{h}_{123} + \b h_{134} & = \b{h}_{1234} + \b{h}_{13} &
     \b{h}_{14} + \b h_{34} & = \b{h}_{134} + \b{h}_{4} &
     \b{h}_{1} + \b h_{4} & \geq \b{h}_{14} + \b{h}_\varnothing.
\end{align*}
After quotienting the lineality and intersecting with an affine hyperplane, we get the $3$-simplex illustrated in \cref{fig:4CycleDeformationCone}.
\begin{figure}
	\centerline{\begin{tikzpicture}[x={(1.25cm,0cm)}, y={(0cm,1.25cm)}, z={(-1.925mm, -3.85mm)}, scale=1.3]
	\newcommand{\translatepoint}[1]{\coordinate (trans) at (#1);}
	\newcommand{\angleGraph}{
		\begin{tikzpicture}[inner sep=0, scale=.15]
			\node[label={[xshift=-.15cm, yshift=-.05cm] \tiny 1}] (1) at (-1,1) {\tiny{$\bullet$}};
			\node[label={[xshift=-.15cm, yshift=-.25cm] \tiny 2}] (2) at (-1,-1) {\tiny{$\bullet$}};
			\node[label={[xshift=.15cm, yshift=-.25cm] \tiny 3}] (3) at (1,-1) {\tiny{$\bullet$}};
			\draw (1) -- (2) -- (3);
		\end{tikzpicture}
	}
	\newcommand{\horseshoe}{
		\begin{tikzpicture}[inner sep=0, scale=.15]
			\node[label={[xshift=-.15cm, yshift=-.05cm] \tiny 1}] (1) at (-1,1) {\tiny{$\bullet$}};
			\node[label={[xshift=-.15cm, yshift=-.25cm] \tiny 2}] (2) at (-1,-1) {\tiny{$\bullet$}};
			\node[label={[xshift=.15cm, yshift=-.25cm] \tiny 3}] (3) at (1,-1) {\tiny{$\bullet$}};
			\node[label={[xshift=.15cm, yshift=-.05cm] \tiny 4}] (4) at (1,1) {\tiny{$\bullet$}};
			\draw (1) -- (2) -- (3) -- (4);
		\end{tikzpicture}
	}
	\newcommand{\cycleGraph}{
		\begin{tikzpicture}[inner sep=0, scale=.15]
			\node[label={[xshift=-.15cm, yshift=-.05cm] \tiny 1}] (1) at (-1,1) {\tiny{$\bullet$}};
			\node[label={[xshift=-.15cm, yshift=-.25cm] \tiny 2}] (2) at (-1,-1) {\tiny{$\bullet$}};
			\node[label={[xshift=.15cm, yshift=-.25cm] \tiny 3}] (3) at (1,-1) {\tiny{$\bullet$}};
			\node[label={[xshift=.15cm, yshift=-.05cm] \tiny 4}] (4) at (1,1) {\tiny{$\bullet$}};
			\draw (1) -- (2) -- (3) -- (4) -- (1);
		\end{tikzpicture}
	}
	\coordinate (a) at (.7,.7,0);
	\coordinate (b) at (.7,-.7,0);
	\coordinate (c) at (.7,0,.7);
	\coordinate (d) at (.7,0,-.7);
	\coordinate (x) at (2.5,-0.5,0);
	\coordinate (y) at (2,2.5,0);
	\coordinate (z) at (4,0,0);
	\coordinate (t) at (0,0,0);
	\coordinate (x) at (2,2.5,0);
	\coordinate (y) at (0,0,0);
	\coordinate (z) at (2.5,-0.5,0);
	\coordinate (t) at (4,0,0);
	\draw (x) node {$\bullet$};
	\draw (y) node {$\bullet$};
	\draw (z) node {$\bullet$};
	\draw (t) node {$\bullet$};
	\draw (x) -- (y);
	\draw (x) -- (z);
	\draw (x) -- (t);
	\draw (y) -- (z);
	\draw[dotted]  (y) -- (t);
	\draw (z) -- (t);
	\draw ($0.5*(x) + 0.5*(y)$) node {$\bullet$};
	\draw ($0.33*(x) + 0.33*(y) + 0.33*(z)$) node {$\bullet$};
	\draw[gray] ($0.25*(x) + 0.25*(y) + 0.25*(z) + 0.25*(t)$) node {$\bullet$};
	%
	\translatepoint{3.5,3,0}
	\draw ($(trans) - 0.5*(a)$) -- ($(trans) + 0.5*(a)$);
	\draw ($(trans) + (.2,.2,0)$) node[right]{$\Delta_{12}$};
	\draw[gray,->] ($(trans) + (-0.1,0.1,0)$) to[bend right] ($(x) + (0.1,0.1,0)$);
	%
	\translatepoint{-1,-1,0}
	\draw ($(trans) - 0.5*(b)$) -- ($(trans) + 0.5*(b)$);
	\draw ($(trans) + (0.4,-.3,0)$) node[right]{$\Delta_{23}$};
	\draw[gray,->] ($(trans) + (0.1,0.1,0)$) to[bend right] ($(y) + (0,-0.1,0)$);
	%
	\translatepoint{2,-1.25,0}
	\draw ($(trans) - 0.5*(c)$) -- ($(trans) + 0.5*(c)$);
	\draw ($(trans) + (.15,-.25,0)$) node[right]{$\Delta_{34}$};
	\draw[gray,->] ($(trans) + (0,0.1,0)$) to[bend left] ($(z) + (-0.1,-0.1,0)$);
	%
	\translatepoint{4.25,1.25,0}
	\draw ($(trans) - 0.5*(d)$) -- ($(trans) + 0.5*(d)$);
	\draw ($(trans) + (0.4,.3,0)$) node[right]{$\Delta_{14}$};
	\draw[gray,->] ($(trans) + (0.1,-0.1,0)$) to[bend left] ($(t) + (0.1,0.1,0)$);
	%
	\translatepoint{-.5,2.8,0}
	\draw (trans) -- ($(trans) + (a)$);
	\draw (trans) -- ($(trans) + (b)$);
	\draw ($(trans) + (a)$) -- ($(trans) + (a) + (b)$);
	\draw ($(trans) + (b)$) -- ($(trans) + (a) + (b)$);
	\draw ($(trans) + (-0.1,0.5,0)$) node {$\mathsf{Z}_{\angleGraph}$};
	\draw[gray,->] ($(trans) + (0.7,-0.8,0)$) to[bend right] ($0.5*(x) + 0.5*(y) + (-0.1,0,0)$);
	%
	\translatepoint{-2,1.5,0}
	\draw (trans) -- ($(trans) + (a)$);
	\draw (trans) -- ($(trans) + (c)$);
	\draw ($(trans) + (a)$) -- ($(trans) + (a) + (c)$);
	\draw ($(trans) + (c)$) -- ($(trans) + (a) + (c)$);
	\draw (trans) -- ($(trans) + (b)$);
	\draw ($(trans) + (b)$) -- ($(trans) + (b) + (c)$);
	\draw ($(trans) + (c)$) -- ($(trans) + (b) + (c)$);
	\draw ($(trans) + (b) + (c)$) -- ($(trans) + (a) + (b) + (c)$);
	\draw ($(trans) + (a) + (c)$) -- ($(trans) + (a) + (b) + (c)$);
	\draw[dotted] ($(trans) + (a) + (b)$) -- ($(trans) + (a) + (b) + (c)$);
	\draw[dotted] ($(trans) + (a) + (b)$) -- ($(trans) + (b)$);
	\draw[dotted] ($(trans) + (a) + (b)$) -- ($(trans) + (a)$);
	\draw ($(trans) + (-.1,0.5,0)$) node {$\mathsf{Z}_{\horseshoe}$};
	\draw[gray,->] ($(trans) + (1.7,-0.7,0)$) to[bend right] ($0.33*(x) + 0.33*(y) + 0.33*(z) + (-0.1,-0.05,0)$);
	%
	\translatepoint{4,-0.5,0}
	\draw (trans) -- ($(trans) + (a)$);
	\draw (trans) -- ($(trans) + (b)$);
	\draw (trans) -- ($(trans) + (c)$);
	\draw[dotted] (trans) -- ($(trans) + (d)$);
	\draw ($(trans) + (a)$) -- ($(trans) + (a) + (c)$);
	\draw ($(trans) + (a)$) -- ($(trans) + (a) + (d)$);
	\draw ($(trans) + (b)$) -- ($(trans) + (b) + (c)$);
	\draw[dotted] ($(trans) + (b)$) -- ($(trans) + (b) + (d)$);
	\draw ($(trans) + (c)$) -- ($(trans) + (c) + (a)$);
	\draw ($(trans) + (c)$) -- ($(trans) + (c) + (b)$);
	\draw[dotted] ($(trans) + (d)$) -- ($(trans) + (d) + (a)$);
	\draw[dotted] ($(trans) + (d)$) -- ($(trans) + (d) + (b)$);
	\draw ($(trans) + (a) + (c)$) -- ($(trans) + (a) + (c) + (b)$);
	\draw ($(trans) + (a) + (c)$) -- ($(trans) + (a) + (c) + (d)$);
	\draw[dotted] ($(trans) + (a) + (d)$) -- ($(trans) + (a) + (d) + (b)$);
	\draw ($(trans) + (a) + (d)$) -- ($(trans) + (a) + (d) + (c)$);
	\draw ($(trans) + (b) + (c)$) -- ($(trans) + (b) + (c) + (a)$);
	\draw ($(trans) + (b) + (c)$) -- ($(trans) + (b) + (c) + (d)$);
	\draw[dotted] ($(trans) + (b) + (d)$) -- ($(trans) + (b) + (d) + (a)$);
	\draw[dotted] ($(trans) + (b) + (d)$) -- ($(trans) + (b) + (d) + (c)$);
	\draw ($(trans) + (b) + (c)$) -- ($(trans) + (b) + (c) + (a)$);
	\draw ($(trans) + (a) + (b) + (c)$) -- ($(trans) + (a) + (b) + (c) + (d)$);
	\draw[dotted] ($(trans) + (a) + (b) + (d)$) -- ($(trans) + (a) + (b) + (c) + (d)$);
	\draw ($(trans) + (a) + (c) + (d)$) -- ($(trans) + (a) + (b) + (c) + (d)$);
	\draw ($(trans) + (b) + (c) + (d)$) -- ($(trans) + (a) + (b) + (c) + (d)$);
	\draw ($(trans) + (-0.1,-0.5,0)$) node {$\mathsf{Z}_{\cycleGraph}$};
	\draw[gray,->] ($(trans) + (-.1,-.1,0)$) to[bend left] ($0.25*(x) + 0.25*(y) + 0.25*(z) + 0.25*(t) + (0.05,-0.1,0)$);
\end{tikzpicture}}
	\caption{A $3$-dimensional affine section of the deformation cone $\deformationCone(\gZono[C_4])$ for the $4$-cycle~$C_4$. The deformations of $\gZono[C_4]$ corresponding to some of the points of $\deformationCone(\gZono[C_4])$ are depicted.}
	\label{fig:4CycleDeformationCone}
\end{figure}
\end{example}


\subsection{Simplicial graphical deformation cones}
\label{subsec:triangleFree}

As an immediate corollary, we obtain a characterization of those graphical zonotopes whose deformation cone is simplicial. 

\begin{corollary}
\label{cor:simplicialGraphicalTypeCones}
The deformation cone~$\deformationCone(\gZono)$ is simplicial (modulo its lineality) if and only if $G$ is triangle-free.
\end{corollary}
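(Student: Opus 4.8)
The plan is to reduce the statement to the numerical data recorded in \cref{coro:numerologyGraphicalTypeCone}. Recall that a pointed polyhedral cone that is full-dimensional in its span is simplicial if and only if its number of facets equals its dimension. Since \cref{thm:main} gives an \emph{irredundant} facet description, the quotient of $\deformationCone(\gZono)$ by its lineality is a pointed cone, full-dimensional in its span, whose facets are in bijection with the triplets $(u,v,S)$ with $\{u,v\}\in E$ and $S\subseteq N(u)\cap N(v)$; call their number $F$. By \cref{coro:numerologyGraphicalTypeCone}, its dimension is $D \eqdef \dim\deformationCone(\gZono) - |V|$, which equals the number of induced cliques of $G$ of size at least $2$ (the $|V|$ singletons being absorbed into the lineality). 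Thus I would reduce the corollary to the purely combinatorial equivalence $F = D \iff G \text{ is triangle-free}$.

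To compare $F$ and $D$, I would fix a linear order on $V$ and send each induced clique $C = \{c_1 < c_2 < \cdots < c_k\}$ with $k \ge 2$ to the pair $(\{c_1,c_2\}, \{c_3,\dots,c_k\})$. This is a legitimate facet index: $\{c_1,c_2\} \in E$, and each $c_i$ with $i \ge 3$ lies in $N(c_1)\cap N(c_2)$ because $C$ is a clique. The map is injective, since $C$ is recovered as the union of the edge with its attached set; in particular $F \ge D$, recovering combinatorially the general inequality between facets and dimension. When $G$ is triangle-free, every edge $\{u,v\}$ satisfies $N(u)\cap N(v) = \varnothing$, so the only facet indices are the pairs $(\{u,v\},\varnothing)$ and the only induced cliques of size at least $2$ are the edges; the map is then a bijection and $F = D = |E|$, so the cone is simplicial. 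Conversely, if $G$ contains a triangle $\{a,b,c\}$ with $a<b<c$, then $(\{a,c\},\{b\})$ is a valid facet index, yet the only clique whose underlying vertex set is $\{a,b,c\}$ is sent to $(\{a,b\},\{c\}) \neq (\{a,c\},\{b\})$; hence $(\{a,c\},\{b\})$ is missed, the map is not surjective, $F > D$, and the cone is not simplicial.

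The step that requires genuine care is the reduction in the first paragraph: one must verify that passing to the quotient by the lineality yields a pointed cone that is full-dimensional in its span, and that the irredundant inequalities of \cref{thm:main} remain in bijection with its facets, so that simpliciality becomes exactly the equality $F = D$. After that, the combinatorial core is to identify the right injection; its surjectivity is an honest bijection for triangle-free graphs, and is destroyed by a single triangle, which is what makes triangle-freeness the precise dividing line.
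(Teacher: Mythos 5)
Your proof is correct and takes essentially the same approach as the paper: both reduce simpliciality, via the irredundancy in \cref{thm:main} and the counts in \cref{coro:numerologyGraphicalTypeCone}, to comparing the number of induced cliques of size at least~$2$ with the number of triples~$(u,v,S)$, concluding equality exactly when $G$ is triangle-free. Your lexicographic injection $C \mapsto (\{c_1,c_2\}, C \ssm \{c_1,c_2\})$ with the explicitly missed triple $(\{a,c\},\{b\})$ is just a minor variant of the paper's counting, which observes that each clique~$K$ with $|K| \ge 3$ already yields $\binom{|K|}{2} > 1$ distinct triples.
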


\begin{proof}
If~$G$ is triangle-free, the deformation cone~$\deformationCone(\gZono)$ has dimension~$|V|+|E|$, lineality space of dimension~$|V|$, and~$|E|$ facets, and hence it is simplicial.
If~$G$ is not triangle-free, then we claim that the number of induced cliques~$K$ of~$G$ with~$|K| \ge 2$ is strictly less than the number of triples~$(u,v,S)$ with~$\{u,v\} \in E$ and~$S \subseteq N(u) \cap N(v)$.
Indeed, each induced clique~$K$ of~$G$ with~$|K| \ge 2$ already produces $\binom{|K|}{2}$ triples of the form~$(u, v, K \ssm \{u,v\})$ which satisfy~$\{u,v\} \in E$ and $K \ssm \{u,v\} \subseteq N(u) \cap N(v)$ and are all distinct.
Since $\binom{|K|}{2} > |K|$ as soon as $|K|\geq 3$, by \cref{coro:numerologyGraphicalTypeCone}, this shows that the deformation cone~$\deformationCone(\gZono)$ is not simplicial.
\end{proof}

\begin{corollary}
If $G$ is triangle-free, then every deformation of $\gZono$ is a zonotope, which is the graphical zonotope of a subgraph of~$G$ up to rescaling of the generators.
\end{corollary}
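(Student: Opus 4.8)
The plan is to read off the structure of every deformation directly from the simplicial description already established. Since $G$ is triangle-free, its only non-empty induced cliques are its vertices and its edges. By \cref{thm:basisAndEquationsGraphicalTypeCones}~\ref{item:cliquesbasis}, the faces $\simplex_K$ over these cliques form a linear basis of $\VD(\gZono)$: the $|V|$ singletons $\simplex_{\{v\}} = \{\b{e}_v\}$ are points and span the lineality of $\deformationCone(\gZono)$ (the $|V|$-dimensional space of translations), while the $|E|$ segments $\simplex_{\{u,v\}} = [\b{e}_u, \b{e}_v]$ complete the basis. By \cref{cor:simplicialGraphicalTypeCones}, the cone $\deformationCone(\gZono)$ is simplicial modulo its lineality, and by \cref{coro:numerologyGraphicalTypeCone} its dimension modulo lineality is $|E|$.

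First I would identify the extreme rays. Recall that the rays of a deformation cone are spanned by the indecomposable Minkowski summands of positive dimension; each segment $\simplex_{\{u,v\}} = [\b{e}_u, \b{e}_v]$ is Minkowski indecomposable (the only weak summands of a segment are its sub-segments and points), so it spans a ray of $\deformationCone(\gZono)$. These $|E|$ segments are linearly independent modulo the lineality, hence lie on distinct rays; and a simplicial cone of dimension $|E|$ (modulo lineality) has exactly $|E|$ rays. Therefore the $\simplex_{\{u,v\}}$ for $\{u,v\} \in E$ are precisely the rays of $\deformationCone(\gZono)$.

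Then I would conclude by expanding an arbitrary deformation in this basis. Any deformation $\polytope{Q}$ of $\gZono$ lies in $\deformationCone(\gZono)$, so modulo the lineality it is a \emph{nonnegative} combination of the rays; lifting back, $\polytope{Q} = \b{t} + \sum_{\{u,v\} \in E} \lambda_{uv}\, [\b{e}_u, \b{e}_v]$ for scalars $\lambda_{uv} \ge 0$ and some translation $\b{t} \in \R^V$. Because all coefficients are nonnegative, this expression is a genuine Minkowski sum of dilated segments (rather than a signed sum of virtual polytopes): it is a translate of a zonotope whose generators $\lambda_{uv}(\b{e}_v - \b{e}_u)$ are rescalings of the generators of $\gZono$. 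Restricting to the edge set $E' \eqdef \set{\{u,v\} \in E}{\lambda_{uv} > 0}$, the deformation $\polytope{Q}$ is exactly the graphical zonotope $\gZono[G']$ of the subgraph $G' \eqdef (V, E')$, up to rescaling of the generators and translation, as claimed.

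The delicate points, both hinging on the triangle-free hypothesis, are the two appeals to nonnegativity: verifying that the segments $\simplex_{\{u,v\}}$ are the genuine extreme rays of the cone (and not merely linearly independent interior elements), and verifying that the coefficients $\lambda_{uv}$ in the expansion of $\polytope{Q}$ are nonnegative so that the virtual decomposition is an honest Minkowski sum. Both follow from simpliciality together with the dimension count of \cref{coro:numerologyGraphicalTypeCone}; without triangle-freeness the cone acquires rays beyond the clique simplices (coming from induced cliques of size at least $3$), and the expansion of a general deformation need no longer be a zonotope.
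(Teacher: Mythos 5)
Your proof is correct and follows essentially the same route as the paper's: identify the edge segments $\simplex_{\{u,v\}}$ as indecomposable summands spanning the rays, use simpliciality and the dimension count to conclude they are \emph{all} the rays, and then expand an arbitrary deformation as a nonnegative Minkowski sum of these segments plus a translation. Your additional care about distinctness of the rays and about nonnegativity turning the virtual expansion into an honest Minkowski sum is a welcome elaboration of steps the paper leaves implicit, but it is not a different argument.
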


\begin{proof}
For any induced clique~$K$ of~$G$ of size at least~$2$, $\simplex_K$ is a Minkowski indecomposable ${(|K|-1)}$-dimensional polytope in the deformation cone~$\deformationCone(\gZono)$ (see for example \cite[15.1.3]{Gruenbaum2003} for a certificate of indecomposability). It spans therefore a ray of~$\deformationCone(\gZono)$.  When $G$ is triangle-free, 
the deformation cone modulo its lineality is of dimension~$|E|$, and the polytopes $\Delta_e$ for $e\in E$ account for the $|E|$ rays of the simplicial deformation cone~$\deformationCone(\gZono)$. 

Therefore, each polytope $P\in\deformationCone(\gZono)$ can be uniquely expressed as a Minkowski sum 
\[P=\sum_{e\in E}\lambda_e \Delta_e\]
 with nonnegative coefficients~$\lambda_e$. Since each $\Delta_e$ is a segment, $P$ is a zonotope, normally equivalent to the graphical zonotope of the subgraph $G'=(V,E')$ with $E'=\set{e\in E}{\lambda_e\neq 0}$.
\end{proof}

\section*{Acknowledgments}

We are grateful to Raman Sanyal for letting us know about his proof of \cref{thm:basisAndEquationsGraphicalTypeCones} with Josephine Yu, as well as for many helpful discussions on the topic.

\bibliographystyle{alpha}
\bibliography{GraphicalZonotopes}
\label{sec:biblio}

\end{document}